\DeclareRobustCommand{\lyxsout}[1]{\ifx\\#1\else\sout{#1}\fi}
\numberwithin{equation}{section}
\numberwithin{figure}{section}
\theoremstyle{plain}
\newtheorem{thm}{\protect\theoremname}[section]
  \theoremstyle{remark}
  \newtheorem*{rem*}{\protect\remarkname}
  \theoremstyle{plain}
  \newtheorem{lem}[thm]{\protect\lemmaname}
  \theoremstyle{plain}
  \newtheorem{prop}[thm]{\protect\propositionname}
  \theoremstyle{plain}
  \newtheorem{cor}[thm]{\protect\corollaryname}
  \theoremstyle{definition}
  \newtheorem{defn}[thm]{\protect\definitionname}
  \providecommand{\corollaryname}{Corollary}
  \providecommand{\definitionname}{Definition}
  \providecommand{\lemmaname}{Lemma}
  \providecommand{\propositionname}{Proposition}
  \providecommand{\remarkname}{Remark}
\providecommand{\theoremname}{Theorem}
\begin{document}
\global\long\def\B{\mathcal{B}}

\global\long\def\R{\mathbb{R}}

\global\long\def\S{\mathbb{S}}

\global\long\def\Q{\mathbb{Q}}

\global\long\def\Z{\mathbb{Z}}

\global\long\def\N{\mathbb{N}}

\global\long\def\C{\mathbb{C}}

\global\long\def\Z{\mathbb{Z}}

\global\long\def\D{\mathcal{D}}

\global\long\def\U{\mathcal{U}}

\global\long\def\P{\mathbb{P}}

\global\long\def\G{\mathbb{G}}

\global\long\def\Y{\mathcal{Y}}

\global\long\def\mult{\mathcal{M}}

\global\long\def\ef{\varphi}

\global\long\def\primlat{\mathcal{\Z}_{\text{prim}}^{k,d}}

\global\long\def\primlatof#1{\mathcal{\Z}_{\text{prim}}^{#1,d}}

\global\long\def\zprim{\mathcal{\Z}_{\text{prim}}^{d}}

\global\long\def\gr{\text{Gr}}

\title{on the distribution of primitive subgroups of $\text{\ensuremath{\Z}}^{d}$
of large covolume}

\author{Michael Bersudsky}

\address{Department of mathematics, Technion, Haifa, Israel.}

\email{bersudsky87@gmail.com}
\begin{abstract}
We prove existence and compute the limiting distribution of the image
of rank-$\left(d-1\right)$ primitive subgroups of $\Z^{d}$ of large
covolume in the space $X_{d-1,d}$ of homothety classes of rank-$\left(d-1\right)$
discrete subgroups of $\R^{d}$. This extends a theorem of Aka, Einsiedler
and Shapira.
\end{abstract}

\maketitle

\section{Introduction}

\thispagestyle{empty}

\let\thefootnote\relax\footnotetext{This project has received funding from the European Research Council  (ERC) under the European Union's Horizon 2020 research and innovation programme (grant agreement No. 754475). The author also acknowledges the support of ISF grant 871/17.} 
Let $\bar{X}_{d-1,d}$ be the space of rank-$\left(d-1\right)$ discrete
subgroups of $\R^{d}$ and let $X_{d-1,d}$ be the space of their
homothety classes, that is
\[
X_{d-1,d}\overset{\text{def}}{=}\bar{X}_{d-1,d}\diagup\sim,
\]
where,
\[
\text{\ensuremath{\Lambda}}\sim\alpha\cdot\Lambda,\ \alpha\in\R^{\times}.
\]
For $\Lambda\in\bar{X}_{d-1,d}$, we will denote by $\left[\Lambda\right]$
its image in $X_{d-1,d}$, namely

\[
\left[\Lambda\right]\overset{\text{def}}{=}\left\{ \alpha\cdot\Lambda\mid\alpha\in\R^{\times}\right\} .
\]
 The\emph{ covolume} function, denoted by 
\[
\text{cov}:\ \bar{X}_{d-1,d}\to\R_{+},
\]
is the function that assigns to each $\Lambda\in\bar{X}_{d-1,d}$
the $\left(d-1\right)$-volume of a fundamental parallelotope. It
is explicitly given by
\begin{equation}
\text{cov}(\Lambda)=\sqrt{\det b^{t}b},\ \Lambda\in X_{d-1,d},\label{eq:covol def}
\end{equation}
where $b\in M_{d\times d-1}(\R)$ is a matrix whose columns form a
$\Z$-basis for $\Lambda$. Let $\primlatof{d-1}\subseteq\bar{X}_{d-1,d}$
be the set of all subgroups that arise as the intersection of  $\Z^{d}$
with a rational hyperplane. Of particular interest to us are the subsets
of $X_{d-1,d}$ defined by
\[
\primlatof{d-1}(T)\overset{\text{def}}{=}\left\{ \left[\Lambda\right]\in X_{d-1,d}\mid\Lambda\in\primlatof{d-1},\ \text{cov}(\Lambda)=T\right\} ,\ T\in\R^{\times}.
\]
The sets $\primlatof{d-1}(T),$ $T\in\R^{\times}$ are finite (see
e.g. Lemma \ref{lem:bijection orthogonal vectors}) and we denote
by $\mu_{T},$ $T\in\R^{\times}$, the uniform probability counting
measures on them. In this note we prove that certain subsequences
of $\left\{ \mu_{T}\right\} $ converge to a probability measure which
we denote by $\mu_{\text{polar}}$. The measure $\mu_{\text{polar}}$
may be described through disintegration (see e.g. \cite{sergant_shapira})
as follows. Let $\gr_{d-1}(\R^{d})$ be the space of hyperplanes in
$\R^{d}$, and for $\mathcal{P}\in\gr_{d-1}(\R^{d})$ let
\begin{equation}
X_{\mathcal{P}}\overset{\text{def}}{=}\left\{ \left[\Lambda\right]\in X_{d-1,d}\mid\Lambda\subseteq\mathcal{P}\right\} .\label{eq:X_p}
\end{equation}
A choice of a linear isomorphism between $\mathcal{P}$ and $\R^{d-1}$
gives an identification of $X_{\mathcal{P}}$ with $X_{d-1}=\text{PGL}{}_{d-1}(\R)\diagup\text{PGL}{}_{d-1}(\Z)$.
Through this identification, the $\text{PGL}{}_{d-1}(\R)$-invariant
probability measure $\mu_{X_{d-1}}$ may be pushed to a measure on
$X_{\mathcal{P}}.$ It turns out that the latter push-forward to $X_{\mathcal{P}}$
is independent of the chosen isomorphism (see e.g. part 3 of Lemma
\ref{lem:horizontal lattices identification with =00005BX_d-1=00005D}),
hence this process defines a measure $\mu_{\mathcal{P}}$ on $X_{\mathcal{P}}$.
Let $\mu_{\gr_{d-1}(\R^{d})}$ be the $\text{SO}{}_{d}(\R)$-invariant
probability measure on $\gr_{d-1}(\R^{d})$. We define
\[
\mu_{\text{polar}}\overset{\text{def}}{=}\int_{\gr_{d-1}(\R^{d})}\mu_{\mathcal{P}}\ d\mu_{\gr_{d-1}(\R^{d})}(\mathcal{P}).
\]
For a prime $p$, let
\begin{equation}
\mathbb{D}(p)\overset{\text{def}}{=}\left\{ m\in\N\mid p\nmid m\right\} .\label{eq:D(P)}
\end{equation}
We prove 
\begin{thm}
\label{thm:maintheorem} The convergence 
\[
\mu_{T_{j}}\overset{\text{weak * }}{\longrightarrow}\mu_{\text{polar}},
\]
holds for:
\begin{enumerate}
\item $d=4$, and $\left\{ T_{j}^{2}\right\} \subseteq\mathbb{D}(p)/8\N$,
for any odd prime $p$.
\item $d=5$, and $\left\{ T_{j}^{2}\right\} \subseteq\mathbb{D}(p)$, for
any odd prime $p$.
\item $d>5$, and $\left\{ T_{j}^{2}\right\} \subseteq\mathbb{N}$.
\end{enumerate}
\end{thm}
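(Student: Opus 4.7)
The plan is to reduce Theorem \ref{thm:maintheorem} to a joint equidistribution statement for primitive integer vectors together with the shapes of their orthogonal lattices, and then to invoke the appropriate Hecke/circle-method equidistribution, the difficulty of which depends on $d$.

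First, using the bijection promised in Lemma \ref{lem:bijection orthogonal vectors}, I would identify $\primlatof{d-1}(T)$ with the set $\zprim(T)/\{\pm1\}$ of primitive integer vectors $v\in\Z^{d}$ of Euclidean norm $T$ up to sign, the correspondence being $v\mapsto\Lambda_{v}:=v^{\perp}\cap\Z^{d}$. Under this identification $\text{cov}(\Lambda_{v})=\|v\|=T$ and $\Lambda_{v}$ sits inside the hyperplane $\mathcal{P}_{v}:=v^{\perp}\in\gr_{d-1}(\R^{d})$.

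Next I would lift $\mu_{T}$ to a joint probability measure $\nu_{T}$ on the incidence variety
\[
I:=\bigl\{(\mathcal{P},[\Lambda])\in\gr_{d-1}(\R^{d})\times X_{d-1,d}:[\Lambda]\in X_{\mathcal{P}}\bigr\},
\]
supported on the points $(\mathcal{P}_{v},[\Lambda_{v}])$. The target measure $\mu_{\text{polar}}$ is then nothing but the pushforward to $X_{d-1,d}$ of
\[
\nu_{\infty}:=\int_{\gr_{d-1}(\R^{d})}\delta_{\mathcal{P}}\otimes\mu_{\mathcal{P}}\,d\mu_{\gr_{d-1}(\R^{d})}(\mathcal{P}),
\]
and it therefore suffices to show that $\nu_{T_{j}}\to\nu_{\infty}$ in the weak-$*$ topology on $I$.

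The projection of $\nu_{T}$ to $\gr_{d-1}(\R^{d})$ records the unit vector $v/\|v\|\in\S^{d-1}$ (up to sign), whose equidistribution through primitive integer vectors of norm $T_{j}\to\infty$ is classical (Linnik--Duke for small $d$, Hardy--Littlewood circle method for $d\geq5$). The subtler point is the conditional distribution of the shape $[\Lambda_{v}]\in X_{\mathcal{P}_{v}}\cong X_{d-1}$ given the direction $v$. This is a higher-dimensional extension of the Aka--Einsiedler--Shapira theorem, and I would handle it dynamically by realizing the fiber over a fixed direction as a (twisted) Hecke orbit inside an adelic double quotient for the orthogonal group of $v^{\perp}$, and then exploiting a $p$-adic Hecke operator with a spectral gap. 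The condition $p\nmid T_{j}^{2}$ (and, for $d=4$, additionally $T_{j}^{2}\not\equiv0\pmod{8}$) is precisely what guarantees that the relevant $p$-adic---and in dimension $4$ also $2$-adic---orbits are sufficiently generic for these arguments to apply.

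The main obstacle is the $d=4$ case: there the dimension counts are sharpest, the classical subconvexity/Linnik input is already delicate, and one needs both a quantitative spectral gap at the odd prime $p$ and a careful treatment of the genus/spinor-genus structure at $2$. The hypothesis $T_{j}^{2}\in\mathbb{D}(p)/8\N$ is designed exactly to sidestep the $2$-adic genus obstruction while preserving the spectral gap at $p$. For $d=5$ the genus at $2$ causes no trouble and a single odd auxiliary prime suffices, and for $d>5$ the circle method delivers the required joint equidistribution unconditionally in $T_{j}$.
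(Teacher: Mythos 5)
Your overall strategy---reduce to a joint equidistribution statement for the pair (direction, shape) and feed it into the Aka--Einsiedler--Shapira $p$-adic dynamical machinery---points in the right direction, and it is indeed the skeleton of the paper's proof. But there is a genuine gap at the decisive step. Your incidence variety $I$ is canonically homeomorphic to $X_{d-1,d}$ itself (the hyperplane is recovered from $[\Lambda]$ as its real span), so the claim that ``it suffices to show $\nu_{T_{j}}\to\nu_{\infty}$ on $I$'' is a restatement of the theorem, not a reduction. All the work is then hidden in your phrase ``the conditional distribution of the shape $[\Lambda_{v}]\in X_{\mathcal{P}_{v}}\cong X_{d-1}$ given the direction,'' and this is exactly where the argument breaks. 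The identification $X_{\mathcal{P}_{v}}\cong X_{d-1}$ depends on a choice of frame for $\mathcal{P}_{v}$, and only the class of $[\Lambda_{v}]$ in $\text{O}_{d-1}(\R)\backslash X_{d-1}$ is independent of that choice; the frame-independent statement is precisely what \cite{AESgrids} proves (equidistribution in $\gr_{d-1}(\R^{d})\times\text{O}_{d}(\R)\backslash X_{d-1,d}$), whereas Theorem \ref{thm:maintheorem} is strictly stronger because it also tracks how the lattice sits rotated inside its hyperplane. Moreover, for fixed $T$ each realized direction $v/\left\Vert v\right\Vert$ carries exactly one lattice, so ``the fiber over a fixed direction'' is a single point and there is no conditional distribution to equidistribute; the correlation between the direction and the in-plane rotation of $\Lambda_{v}$ must be controlled jointly, not fiberwise.

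The paper's device for doing this---for which your proposal offers no substitute---is Proposition \ref{prop:Tthe polar coordinates}: the homeomorphism $X_{d-1,d}\cong\Delta K^{\pm}\backslash\left(\text{SO}_{d}(\R)\times P\diagup Q\right)$, which glues the rotation coordinate to the shape coordinate along the common subgroup $K^{\pm}=P\cap\text{SO}_{d}(\R)$. It is this gluing that lets the $S$-arithmetic orbits $O_{v,p}$ in $\G(\R\times\Q_{p})/\G\left(\Z\left[\frac{1}{p}\right]\right)$, with $\G=\text{SO}_{d}\times\text{ASL}_{d-1}$, be pushed all the way down to $X_{d-1,d}$ via Diagram \eqref{eq:main diagram}, rather than only to $\gr_{d-1}(\R^{d})\times\text{O}_{d}(\R)\backslash X_{d-1,d}$. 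Two further points. First, even granting the equidistribution of the orbits (Theorem \ref{thm:AESgrids thm}), one must compare the projected orbit measure with the uniform counting measure $\mu_{T}$; the stabilizer weights are not automatically equal and this requires the argument of Lemma \ref{lem:weight lemma from aes-1}. Second, your heuristics for the congruence conditions are off: for $d\geq4$ the input theorem rests on unipotent dynamics for $H_{v}(\Q_{p})$ (the conditions $p\nmid\left\Vert v\right\Vert ^{2}$, and $8\nmid\left\Vert v\right\Vert ^{2}$ when $d=4$, are hypotheses inherited from that theorem, ensuring the relevant $p$-adic stabilizer is noncompact), not on subconvexity, Linnik-type spectral gaps, or a $2$-adic genus analysis carried out within this proof.
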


\begin{rem*}
It should be possible to prove a version of Theorem \ref{thm:maintheorem}
for $d=3$ by relying on the work of \cite{AES3d}. Also, it seems
that the unnecessary congruence conditions in dimensions $4$ and
$5$ can be removed and an effective estimate on the convergence can
be obtained by exploiting a theorem of Einsiedler, R{\"u}hr and Wirth
found in \cite{Effective_aes_Ruhr}. In order to do so, one should
replace Theorem \ref{thm:AESgrids thm} (of \cite{AESgrids}) stated
in this note, with the corresponding theorems of \cite{AES3d} and
\cite{Effective_aes_Ruhr} and go along the lines of sections \ref{sec:The-p-adic-factory}
and \ref{sec:proof of the equidisitrbution}. 
\end{rem*}

\subsection{Background for Theorem \ref{thm:maintheorem} }

W. Schmidt in \cite{Schmidt1998,Schmidt2015} computed the distribution
of the homothety classes the $\left(d-1\right)$-integral subgroups
through the filtration 
\[
\left\{ \left[\Lambda\right]\in X_{d-1,d}\mid\Lambda\in\primlatof{d-1},\ \ \text{cov}(\Lambda)\leq T\right\} ,\ \text{as }T\to\infty.
\]
Hence, Theorem \ref{thm:maintheorem} should be viewed as a sparse
version of Schmidt's result. Later, Aka, Einsiedler and Shapira in
\cite{AESgrids,AES3d} computed the limiting distribution of the image
of the sets $\primlatof{d-1}(T)$ in the space $\gr_{d-1}(\R^{d})\times\text{O}_{d}(\R)\backslash X_{d-1,d}$.
Since there is a natural surjection $\pi:X_{d-1,d}\to\gr_{d-1}(\R^{d})\times\text{O}_{d}(\R)\backslash X_{d-1,d}$,
Theorem \ref{thm:maintheorem} implies the main result of \cite{AESgrids}.
We also note that the type of problem considered here may be viewed
as a natural generalization of the problem considered by Y. Linnik
regarding the equidistribution of the projection to the unit sphere
of primitive integer vectors on large spheres (see \cite{Lin68},
and also \cite{elenberg_michel_venaktesh} for a modern review). 

\subsection{Organization of the note and proof ideas}

We provide a novel interpretation of $X_{d-1,d}$ as a double coset
space (see Proposition \ref{prop:Tthe polar coordinates}) which allows
to use the methods and results of \cite{AESgrids} in order to prove
Theorem \ref{thm:maintheorem}. In an overview, the method of \cite{AESgrids}
allows to interpret the sets $\primlatof{d-1}(T)$ as compact orbits
in an S-arithmetic space and to relate their natural measure to the
measures $\mu_{T}$. A key theorem of \cite{AESgrids} states that
those orbits equidistribute, which eventually allows to deduce Theorem
\ref{thm:maintheorem}. The organization of the note is as follows. 
\begin{itemize}
\item In Section \ref{sec:Polar-coordinates-on} we describe $X_{d-1,d}$
as a coset space, and as a double coset space. We also discuss the
measure $\mu_{\text{polar }}$ in detail. 
\item In Section \ref{sec:The-p-adic-factory} we discuss the method that
is used to ``generate'' elements of $\primlatof{d-1}$ by the p-adics. 
\item In Section \ref{sec:proof of the equidisitrbution} we discuss the
resulting measures and conclude the proof.
\end{itemize}

\section{\label{sec:Polar-coordinates-on} $X_{d-1,d}$ as a Homogenous space
and its polar coordinates}

\subsection{The transitive action of $\text{SL}{}_{d}(\protect\R)$}

The group $\text{SL}{}_{d}(\R)$ acts from the left on $X_{d-1,d}$
by 
\[
g\cdot[\Lambda]=[g\Lambda],\ g\in\text{SL}{}_{d}(\R),\ [\Lambda]\in X_{d-1,d},
\]
where 
\[
g\Lambda=\left\{ gv\mid v\in\Lambda\right\} .
\]
Let $e_{i},$ $i\in\{1,..,d\}$ be the standard basis vectors of $\R^{d}$
and note that for $g\in\text{SL}{}_{d}(\R)$, the set $\left\{ ge_{1},..,ge_{d-1}\right\} $
consists of the first $\left(d-1\right)$ columns of $g$. Since basis
vectors of any $\Lambda\in X_{d-1,d}$ can be put to be the first
$\left(d-1\right)$ columns of some $\text{SL}{}_{d}(\R)$ matrix,
we deduce that the $\text{SL}{}_{d}(\R)$ orbit of 
\[
x_{0}\overset{\text{def}}{=}\left[\text{span }_{\Z}\{e_{1},..,e_{d-1}\}\right]
\]
equals to $X_{d-1,d}$. A computation shows that 
\[
Q_{d-1,d}\overset{\text{def}}{=}\left\{ \left(\begin{array}{cc}
\lambda\gamma & *\\
0_{1\times d} & 1/\det(\lambda\gamma)
\end{array}\right)\mid\lambda\in\R^{\times},\gamma\in\text{GL}{}_{d-1}(\Z)\right\} 
\]
is the stabilizer of $x_{0}$. Therefore, we get the identification
\[
X_{d-1,d}=\text{SL}{}_{d}(\R)\diagup Q_{d-1,d}.
\]
\begin{rem*}
In terms of the coset space, the collection $\left\{ \left[\Lambda\right]\in X_{d-1,d}\mid\Lambda\in\Z_{\text{prim}}^{d-1,d}\right\} $
is identified with the orbit $\text{SL}{}_{d}(\Z)x_{0}$.
\end{rem*}

\subsubsection{The measure $\mu_{\text{polar }}$}

Let $P_{d-1,d}$ be the parabolic group,
\[
P_{d-1,d}\overset{\text{def}}{=}\left\{ \left(\begin{array}{cc}
m & *\\
0_{1\times d} & 1/\det m
\end{array}\right)\mid m\in\text{GL}{}_{d-1}(\R)\right\} .
\]
\begin{lem}
\label{lem:horizontal lattices identification with =00005BX_d-1=00005D}
Let \emph{$g\in\text{SL}{}_{d}(\R)$}, and let $\mathcal{P}=\text{span}_{\R}\{ge_{1},..,ge_{d-1}\}$.
Then:
\begin{enumerate}
\item \label{enu:lattices in hyperplane 1} The subset $X_{\mathcal{P}}$
\emph{(}see \eqref{eq:X_p}\emph{)} is identified with $gP_{d-1,d}\diagup Q_{d-1,d}$.
\item \label{enu:lattices in hyperplane 2}The map\emph{
\[
\ef_{g}:\text{PGL}{}_{d-1}(\R)\diagup\text{PGL}{}_{d-1}(\Z)\to X_{\mathcal{P}},
\]
}which sends\emph{
\[
\R^{\times}m\cdot\text{PGL}{}_{d-1}(\Z)\mapsto g\left(\begin{array}{cc}
m & 0_{d-1\times1}\\
0_{1\times d-1} & 1
\end{array}\right)Q_{d-1,d},
\]
}is a homeomorphism.
\item \label{enu:lattices in hyperplane 3}Assume that $gP_{d-1,d}=g'P_{d-1,d}$.
Then,
\begin{equation}
\left(\ef_{g}\right)_{*}\mu_{X_{d-1}}=\left(\ef_{g'}\right)_{*}\mu_{X_{d-1}},\label{eq:pushforwards equal}
\end{equation}
where $\mu_{X_{d-1}}$ is the \emph{$\text{PGL}{}_{d-1}(\R)$}-invariant
probability measure on $X_{d-1}$. 
\end{enumerate}
\end{lem}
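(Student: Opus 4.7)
The three parts can be proved by successively unpacking definitions at the matrix level, using throughout the observation that $P_{d-1,d}$ is precisely the $\text{SL}_d(\R)$-stabilizer of the hyperplane $\text{span}_\R\{e_1,\dots,e_{d-1}\}$.

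For part~\eqref{enu:lattices in hyperplane 1}, I would work inside the identification $X_{d-1,d}=\text{SL}_d(\R)/Q_{d-1,d}$ established above: a coset $hQ_{d-1,d}$ lies in $X_{\mathcal{P}}$ exactly when the lattice $h\,\text{span}_\Z\{e_1,\dots,e_{d-1}\}$ is contained in $\mathcal{P}=g\,\text{span}_\R\{e_1,\dots,e_{d-1}\}$. Since both subspaces are $(d-1)$-dimensional, this containment is equivalent to $g^{-1}h$ preserving the standard hyperplane, i.e.\ $g^{-1}h\in P_{d-1,d}$, whence $X_{\mathcal{P}}=gP_{d-1,d}/Q_{d-1,d}$.

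For part~\eqref{enu:lattices in hyperplane 2}, my plan is to describe $\ef_g$ geometrically: send $\R^\times m\cdot\text{PGL}_{d-1}(\Z)$ to the homothety class of $g(m\Z^{d-1})$, where $m\Z^{d-1}$ is embedded into $\text{span}_\R\{e_1,\dots,e_{d-1}\}\hookrightarrow\R^d$. This formulation makes well-definedness transparent, since scaling $m$ by an element of $\R^\times$ merely rescales the lattice, and right multiplication by $\text{GL}_{d-1}(\Z)$ leaves the lattice unchanged. Surjectivity follows by pulling any $\Lambda\subseteq\mathcal{P}$ back through $g^{-1}$ to obtain a rank-$(d-1)$ lattice in $\text{span}_\R\{e_1,\dots,e_{d-1}\}\cong\R^{d-1}$; injectivity follows because a homothety between $g(m\Z^{d-1})$ and $g(m'\Z^{d-1})$ descends under $g^{-1}$ to a homothety in $\R^{d-1}$, forcing $[m]=[m']$ in $X_{d-1}$. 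The homeomorphism property is then routine from the quotient/manifold structures on both sides.

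For part~\eqref{enu:lattices in hyperplane 3}, write $g'=gp$ with $p=\begin{pmatrix}a&b\\0&1/\det a\end{pmatrix}\in P_{d-1,d}$. Since $p$ preserves $\text{span}_\R\{e_1,\dots,e_{d-1}\}$ and acts there by $v\mapsto av$, the lattice description yields $\ef_{g'}([m])=[g(am\Z^{d-1})]=\ef_g([am])$, that is $\ef_{g'}=\ef_g\circ L_{[a]}$, where $L_{[a]}$ denotes left translation by $[a]\in\text{PGL}_{d-1}(\R)$ on $X_{d-1}$. Equation~\eqref{eq:pushforwards equal} is then immediate from the $\text{PGL}_{d-1}(\R)$-invariance of $\mu_{X_{d-1}}$. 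The one potential obstacle is a bookkeeping subtlety: the block matrix $\begin{pmatrix}m&0\\0&1\end{pmatrix}$ in the statement is not in $\text{SL}_d(\R)$, so the coset formula must be read with the understanding that the scalar ambiguity is absorbed by the homothety equivalence. Working directly with lattices rather than with $\text{SL}_d$-representatives sidesteps this issue and keeps all three arguments elementary.
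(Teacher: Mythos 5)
Your proposal is correct and follows essentially the same route as the paper: part (1) via the observation that $P_{d-1,d}$ is the $\text{SL}_d(\R)$-stabilizer of the standard hyperplane, part (2) by transporting $X_{d-1}\cong P_{d-1,d}\diagup Q_{d-1,d}$ to $gP_{d-1,d}\diagup Q_{d-1,d}$ by left translation with $g$, and part (3) by writing $g'=gp$ and invoking the $\text{PGL}_{d-1}(\R)$-invariance of $\mu_{X_{d-1}}$. Your bookkeeping remark about the determinant is well taken: the paper's proof silently resolves it by using $1/\det m$ in the lower-right corner (which represents the same homothety class), while your lattice-level formulation absorbs the scalar directly.
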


\begin{proof}
We observe that 
\[
\left\{ g\in\text{SL}_{d}(\R)\mid\mathcal{P}=\text{span}_{\R}\{ge_{1},..,ge_{d-1}\}\right\} =gP_{d-1,d},
\]
hence \eqref{enu:lattices in hyperplane 1} follows. Next, in order
prove \eqref{enu:lattices in hyperplane 2}, we note that the map
\[
\phi_{1}:\text{PGL}{}_{d-1}(\R)\diagup\text{PGL}{}_{d-1}(\Z)\to P_{d-1,d}\diagup Q_{d-1,d},
\]
 that sends,
\[
\R^{\times}m\cdot\text{PGL}{}_{d-1}(\Z)\mapsto\left(\begin{array}{cc}
m & 0_{d-1\times1}\\
0_{1\times d-1} & 1/\det m
\end{array}\right)Q_{d-1,d},\ \ m\in\text{GL}{}_{d-1}(\R),
\]
 is a homeomorphism, and the map
\[
\phi_{2}:P_{d-1,d}\diagup Q_{d-1,d}\to gP_{d-1,d}\diagup Q_{d-1,d},
\]
defined by multiplication from the left by $g,$ is also a homeomorphism.
Hence $\phi_{2}\circ\phi_{1}$ is a homeomorphism, which proves \eqref{enu:lattices in hyperplane 2}.
Finally, we prove \eqref{enu:lattices in hyperplane 3}. We let $g,g'\in\text{SL}{}_{d}(\R)$
be such that 
\[
g'=gp,
\]
for some $p\in P_{d-1,d}$, where $p=\left(\begin{array}{cc}
m_{p} & v\\
0_{1\times d} & 1/\det m_{p}
\end{array}\right)$. Then, a short calculation shows that 
\begin{equation}
\ef_{g'}\left(\R^{\times}m\text{PGL}{}_{d-1}(\Z)\right)=\ef_{g}\left(\R^{\times}m_{p}m\text{PGL}{}_{d-1}(\Z)\right),\label{eq:ef_g and ef_g_tilde}
\end{equation}
hence, since $\mu_{X_{d-1}}$ is $\text{PGL}{}_{d-1}(\R)$-invariant,
we obtain \eqref{eq:pushforwards equal}. 
\end{proof}
\begin{rem*}
In the rest, we shall abuse notation and denote the measure $\left(\ef_{id}\right)_{*}\mu_{X_{d-1}}$
on $P_{d-1,d}\diagup Q_{d-1,d}$ by $\mu_{X_{d-1}}$.
\end{rem*}
We denote 
\[
K_{d-1,d}^{\pm}\overset{\text{def}}{=}P_{d-1,d}\cap\text{SO}{}_{d}(\R),
\]
which is isomorphic to $\text{O}_{d-1}(\R),$ and identify $Gr_{d-1}(\R^{d})$
with $K_{d-1,d}^{\pm}\diagdown\text{SO}{}_{d}(\R)$ via the map
\[
K_{d-1,d}^{\pm}\rho\mapsto\text{span}_{\R}\{\rho^{-1}e_{1},..,\rho^{-1}e_{d-1}\}.
\]
\begin{rem*}
To ease the notation, we will omit in the following the indices $d-1,d$
from $P_{d-1,d}$, $Q_{d-1,d}$ and $K_{d-1,d}^{\pm}$. 
\end{rem*}
Let $\mu_{Gr_{d-1}(\R^{d})}$ be the $\text{SO}{}_{d}(\R)$-invariant
probability measure and for $f\in C_{c}(X_{d-1,d})$ we define $\hat{f}\in C_{c}(Gr_{d-1}(\R^{d}))$
by
\[
\hat{f}(K^{\pm}\rho)\overset{\text{def}}{=}\left(\ef_{\rho^{-1}}\right)_{*}\mu_{X_{d-1}}(f),
\]
which is well defined by part 3 of Lemma \ref{lem:horizontal lattices identification with =00005BX_d-1=00005D}.
Then, the measure $\mu_{\text{polar }}$ is given by 
\begin{equation}
\mu_{\text{polar }}(f)\overset{\text{def}}{=}\int_{\text{Gr}_{d-1}(\R^{d})}\hat{f}(K^{\pm}\rho)\ d\mu_{Gr_{d-1}(\R^{d})}(K^{\pm}\rho).\label{eq:mu polar section 2}
\end{equation}

Note that the description \eqref{eq:mu polar section 2} of $\mu_{\text{polar }}$
yields the same measure defined in the introduction, although stated
slightly differently. We chose this description since it suits well
to the proof of Lemma \ref{lem:mult push nu polar to mu polar}.

\subsection{Alternative description of $X_{d-1,d}$ via polar coordinates}

Here we shall give a description of the elements of $X_{d-1,d}$ by
their orientation and by their shape, hence the name of \emph{polar
coordinates}. Those coordinates will serve as a bootstrap to the technique
of \cite{AESgrids}.

\subsubsection{The multiplication map}

Let $\Delta K^{\pm}$ be the diagonal embedding in $\text{SO}{}_{d}(\R)\times P$,
which is defined by
\[
\Delta K^{\pm}\overset{\text{def}}{=}\left\{ (k,k)\mid k\in K_{d-1,d}^{\pm}\right\} \leq\text{SO}{}_{d}(\R)\times P.
\]

The following double coset space,
\[
X_{d-1,d}^{\text{polar}}\overset{\text{def}}{=}\Delta K^{\pm}\diagdown\left(\text{SO}{}_{d}(\R)\times P\diagup Q\right),
\]
will be shown to be homeomorphic to $X_{d-1,d}$. Consider the map
\[
\mult\ :\ X_{d-1,d}^{\text{polar}}\to X_{d-1,d},
\]
defined by 
\[
\mult\left(\Delta K^{\pm}(\rho,\eta Q)\right)=\rho^{-1}\eta Q.
\]
It is well defined since if $(\rho',\eta'Q)=(k\rho,k\eta Q),$ then
$\rho'^{-1}\eta'Q=\rho^{-1}\eta Q$. 
\begin{prop}
\label{prop:Tthe polar coordinates}The map $\mult$ is a homeomorphism. 
\end{prop}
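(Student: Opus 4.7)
The main input is the Iwasawa-type decomposition $\text{SL}_d(\R)=\text{SO}_d(\R)\cdot P$, which says that every $g\in\text{SL}_d(\R)$ can be written (non-uniquely) as $g=\rho^{-1}\eta$ with $\rho\in\text{SO}_d(\R)$ and $\eta\in P$. Moreover, the multiplication map $\Phi:\text{SO}_d(\R)\times P\to\text{SL}_d(\R)$, $(\rho,\eta)\mapsto\rho^{-1}\eta$, is surjective, continuous, and open, with fibres equal to the orbits of the diagonal $K^{\pm}$ action $k\cdot(\rho,\eta)=(k\rho,k\eta)$ (using that $\text{SO}_d(\R)\cap P=K^{\pm}$ is compact). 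I would take this as the starting point.

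The plan is first to establish bijectivity. Surjectivity of $\mult$ is immediate: given $gQ\in X_{d-1,d}$, write $g=\rho^{-1}\eta$ by Iwasawa and observe $\mult(\Delta K^{\pm}(\rho,\eta Q))=gQ$. For injectivity, suppose $\rho_{1}^{-1}\eta_{1}Q=\rho_{2}^{-1}\eta_{2}Q$, so that $\rho_{1}^{-1}\eta_{1}=\rho_{2}^{-1}\eta_{2}q$ for some $q\in Q\subseteq P$. Then $\rho_{2}\rho_{1}^{-1}=\eta_{2}q\eta_{1}^{-1}\in\text{SO}_d(\R)\cap P=K^{\pm}$; setting $k=\rho_{2}\rho_{1}^{-1}$ yields $\rho_{2}=k\rho_{1}$ and $\eta_{2}Q=k\eta_{1}Q$, so the two polar representatives lie in the same $\Delta K^{\pm}$-orbit. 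Continuity of $\mult$ is then routine: the map $(\rho,\eta)\mapsto\rho^{-1}\eta Q$ from $\text{SO}_d(\R)\times P$ into $\text{SL}_d(\R)/Q$ is continuous as a composition of continuous group operations with a quotient map, and (since the right $Q$-action on $P$ commutes with the left $\Delta K^{\pm}$-action) it factors continuously through the double quotient $X_{d-1,d}^{\text{polar}}$.

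The real content, and the step I expect to be the main obstacle, is showing continuity of $\mult^{-1}$, i.e.\ that the Iwasawa decomposition respects the quotient structures and not merely the underlying bijection. I would prove this by exhibiting a continuous section. Given $g_{0}\in\text{SL}_d(\R)$, apply Gram--Schmidt to the columns of $g_{0}$ to produce $\rho_0\in\text{SO}_d(\R)$ and an upper triangular $\eta_0\in P$ (with last diagonal entry adjusted to $1/\det$ of the leading block) such that $g_{0}=\rho_{0}^{-1}\eta_{0}$, and observe that Gram--Schmidt depends continuously on $g_{0}$ in a neighbourhood of any fixed point, giving a local continuous section $\sigma:U\to\text{SO}_d(\R)\times P$ of $\Phi$. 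Composing $\sigma$ with the quotient maps $\text{SO}_d(\R)\times P\to\text{SO}_d(\R)\times(P/Q)\to X_{d-1,d}^{\text{polar}}$ and precomposing with $\text{SL}_d(\R)\to X_{d-1,d}$ yields a local continuous inverse of $\mult$, and independence of choice on the overlap is forced by the bijectivity already shown. Since such local inverses exist around every point of $X_{d-1,d}$, the global map $\mult^{-1}$ is continuous, completing the proof that $\mult$ is a homeomorphism.
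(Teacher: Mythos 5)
Your proposal is correct and follows essentially the same route as the paper: the same algebraic computation for injectivity (using $\text{SO}_d(\R)\cap P=K^{\pm}$), the universal property of the quotient for continuity of $\mult$, and the continuity of the Iwasawa/Gram--Schmidt decomposition for continuity of $\mult^{-1}$. The only cosmetic difference is that the paper invokes the global homeomorphism $\text{SO}_d(\R)\times A\times N\to\text{SL}_d(\R)$ and descends through the quotient in one step, whereas you build local continuous sections and patch them; both hinge on the same input.
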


\begin{proof}
To prove injectivity, we assume that 
\[
\mult\left(\Delta K^{\pm}(\rho_{1},\eta_{1}Q)\right)=\mult\left(\Delta K^{\pm}(\rho_{2},\eta_{2}Q)\right),
\]
which is equivalent to that
\[
\rho_{1}^{-1}\eta_{1}q=\rho_{2}^{-1}\eta_{2},
\]
for some $q\in Q$. Then, 
\[
SO_{d}(\R)\ni\rho_{2}\rho_{1}^{-1}=\eta_{2}q^{-1}\eta_{1}^{-1}\in P,
\]
hence there is a $k\in K^{\pm}$ such that 
\[
\rho_{2}\rho_{1}^{-1}=\eta_{2}q^{-1}\eta_{1}^{-1}=k,
\]
which in turn implies
\[
\Delta K^{\pm}(\rho_{1},\eta_{1}Q)=\Delta K^{\pm}(\rho_{2},\eta_{2}Q).
\]
To prove continuity of $\mult$, we consider the following commuting
diagram
\[
\xymatrix{\text{SO}{}_{d}(\R)\times P\ar[d]\ar[r] & \text{SL}{}_{d}(\R)\ar[d]\\
X_{d-1,d}^{\text{polar}}\ar[r]_{\ \ \mult} & X_{d-1,d}
}
\]
where the vertical maps are the natural projections, and the horizontal
upper arrow sends
\[
(\rho,\eta)\mapsto\rho^{-1}\eta.
\]
Note that the resulting map from $\text{SO}{}_{d}(\R)\times P$ to
$X_{d-1,d}$ is a composition of continuous maps, hence is continuous.
Therefore, by the universal property of the quotient space, $\mult$
is continuous. Next, we compute the inverse of $\mult$ and show it
is continuous. Let $A\leq\text{SL}{}_{d}(\R)$ be the diagonal subgroup
with positive entries, and $N\leq\text{SL}{}_{d}(\R)$ be the group
of upper triangular unipotent matrices. The map (Iwasawa decomposition)
\[
\psi:\text{SO}{}_{d}(\R)\times A\times N\to\text{SL}{}_{d}(\R),
\]
given by
\[
\psi(\rho,a,n)=\rho an,
\]
is a homeomorphism (see e.g. \cite{Bekka_mayer}, Chapter 5). Consider
the following commuting diagram,
\[
\xymatrix{\text{SO}{}_{d}(\R)\times P\ar[d] & \text{SO}{}_{d}(\R)\times A\times N\ar_{p\ }[l] & \text{SL}{}_{d}(\R)\ar[l]_{\ \ \ \ \ \ \ \ \ \psi^{-1}}\ar[d]\\
X_{d-1,d}^{\text{polar}} &  & X_{d-1,d}\ar[ll]
}
\]
where the map $p$ is defined by 
\[
p(\rho,a,n)\overset{\text{def}}{=}\left(\rho^{-1},an\right),
\]
and the horizontal maps are the natural projections. The map corresponding
to the lower horizontal arrow sends
\[
\rho anQ\mapsto\Delta K^{\pm}\left(\rho^{-1},anQ\right),
\]
which is clearly an inverse for $\mult$. Since the resulting map
from $\text{SL}_{d}(\R)$ to $X_{d-1,d}^{\text{polar}}$ is a composition
of continuous maps, we get that it is continuous, hence by the universal
property of the quotient space, $\mult^{-1}$ is continuous.
\end{proof}

\subsubsection{The measure $\mu_{\text{polar }}$ through the polar coordinates}

Consider the map 
\[
q_{\Delta K^{\pm}}:\text{SO}{}_{d}(\R)\times P\diagup Q\to X_{d-1,d}^{\text{polar}},
\]
that divides from the left by $\Delta K^{\pm}$. We define 
\begin{equation}
\nu_{\text{polar}}\overset{\text{def}}{=}\left(q_{\Delta K^{\pm}}\right)_{*}\mu_{\text{SO}{}_{d}(\R)}\otimes\mu_{X_{d-1}}.\label{eq:nu_polar definition}
\end{equation}
 
\begin{lem}
\label{lem:mult push nu polar to mu polar}It holds that $\mult_{*}\nu_{\text{polar}}=\mu_{\text{polar}}$.
\end{lem}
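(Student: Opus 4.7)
The plan is to test the claimed identity of Borel probability measures against an arbitrary $f\in C_c(X_{d-1,d})$ and unfold both sides to the same integral. By definition of $\nu_{\text{polar}}$ as a pushforward under $q_{\Delta K^{\pm}}$, and Fubini, we have
\[
\mult_{*}\nu_{\text{polar}}(f)=\int_{\text{SO}_{d}(\R)}\int_{P/Q}f\bigl(\mult(\Delta K^{\pm}(\rho,\eta Q))\bigr)\,d\mu_{X_{d-1}}(\eta Q)\,d\mu_{\text{SO}_{d}(\R)}(\rho),
\]
and the integrand collapses to $f(\rho^{-1}\eta Q)$ by the formula for $\mult$. So the task reduces to identifying the inner integral with $\hat f(K^{\pm}\rho)$, and then recognising the outer integral as integration over $\text{Gr}_{d-1}(\R^{d})$.

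For the inner integral, I would use the abuse of notation fixed in Section~\ref{sec:Polar-coordinates-on}: the measure $\mu_{X_{d-1}}$ on $P/Q$ is $(\ef_{id})_{*}\mu_{X_{d-1}}$. Changing variables via $\ef_{id}$ and using the explicit formula in part~\eqref{enu:lattices in hyperplane 2} of Lemma~\ref{lem:horizontal lattices identification with =00005BX_d-1=00005D}, which gives $\ef_{\rho^{-1}}(\R^{\times}m\,\text{PGL}_{d-1}(\Z))=\rho^{-1}\ef_{id}(\R^{\times}m\,\text{PGL}_{d-1}(\Z))$, I rewrite
\[
\int_{P/Q}f(\rho^{-1}\eta Q)\,d\mu_{X_{d-1}}(\eta Q)=\int_{X_{d-1}}f\bigl(\ef_{\rho^{-1}}(x)\bigr)\,d\mu_{X_{d-1}}(x)=(\ef_{\rho^{-1}})_{*}\mu_{X_{d-1}}(f),
\]
which is exactly $\hat f(K^{\pm}\rho)$ by the definition of $\hat f$ given just before \eqref{eq:mu polar section 2}.

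For the outer integral, since $\hat f$ is a function on $\text{Gr}_{d-1}(\R^{d})=K^{\pm}\backslash\text{SO}_{d}(\R)$, the pullback $\rho\mapsto\hat f(K^{\pm}\rho)$ is $K^{\pm}$-left-invariant, and the Haar probability measure $\mu_{\text{SO}_{d}(\R)}$ pushes forward under the quotient map to $\mu_{\text{Gr}_{d-1}(\R^{d})}$ (this is the very definition of the latter as the $\text{SO}_{d}(\R)$-invariant probability measure on the homogeneous space). Hence
\[
\int_{\text{SO}_{d}(\R)}\hat f(K^{\pm}\rho)\,d\mu_{\text{SO}_{d}(\R)}(\rho)=\int_{\text{Gr}_{d-1}(\R^{d})}\hat f(K^{\pm}\rho)\,d\mu_{\text{Gr}_{d-1}(\R^{d})}(K^{\pm}\rho),
\]
which is $\mu_{\text{polar}}(f)$ by \eqref{eq:mu polar section 2}.

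The proof is therefore a direct unfolding; there is no real obstacle beyond careful bookkeeping. The one place where one must be attentive is the double role of the symbol $\mu_{X_{d-1}}$ — once as the $\text{PGL}_{d-1}(\R)$-invariant measure on $\text{PGL}_{d-1}(\R)/\text{PGL}_{d-1}(\Z)$ used to define $\hat f$, and once as its pushforward $(\ef_{id})_{*}\mu_{X_{d-1}}$ on $P/Q$ used in the definition of $\nu_{\text{polar}}$ — and the clean identity $\ef_{\rho^{-1}}=\rho^{-1}\cdot\ef_{id}$ is what allows the two roles to be glued together. Part~\eqref{enu:lattices in hyperplane 3} of Lemma~\ref{lem:horizontal lattices identification with =00005BX_d-1=00005D} is what ensures $\hat f$ is well defined as a function on $\text{Gr}_{d-1}(\R^{d})$, and hence that the final descent to the Grassmannian is legitimate.
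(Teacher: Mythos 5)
Your proof is correct and follows essentially the same route as the paper: both unfold $\mult_{*}\nu_{\text{polar}}(f)$ into a double integral, identify the inner integral with $\hat f(K^{\pm}\rho)$ via $(\ef_{\rho^{-1}})_{*}\mu_{X_{d-1}}$, and descend the outer integral to $\text{Gr}_{d-1}(\R^{d})$ using the relation between the Haar measure on $\text{SO}_{d}(\R)$ and $\mu_{\text{Gr}_{d-1}(\R^{d})}$. The only (cosmetic) difference is the order of operations: the paper disintegrates $\mu_{\text{SO}_{d}(\R)}$ over the Grassmannian and cancels the inner $K^{\pm}$-average against the $K^{\pm}$-invariance of $\mu_{X_{d-1}}$ before pushing forward by $\mult$, whereas you push forward first and invoke the $K^{\pm}$-invariance through part~3 of Lemma~\ref{lem:horizontal lattices identification with =00005BX_d-1=00005D} at the descent step.
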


\begin{proof}
First, recall that for $\ef\in C_{c}(\text{SO}{}_{d}(\R)$) ,

\[
\int_{\text{SO}{}_{d}(\R)}\ef\ d\mu_{\text{SO}{}_{d}(\R)}=\int_{\gr_{d-1}(\R^{d})}\left(\int_{K^{\pm}}\ef(k\rho)\ d\mu_{K^{\pm}}(k)\right)d\mu_{\gr_{d-1}(\R^{d})}(K^{\pm}\rho).
\]
Hence for $f\in C_{c}(X_{d-1,d}^{\text{polar}})$,
\[
\nu_{\text{polar}}(f)=\int f(q_{\Delta K^{\pm}}(\rho,\eta Q))d\mu_{X_{d-1}}(\eta Q)d\mu_{\text{SO}{}_{d}(\R)}(\rho)=
\]
\begin{equation}
\int\left(\int f(q_{\Delta K^{\pm}}(k\rho,\eta Q))d\mu_{X_{d-1}}(\eta Q)d\mu_{K^{\pm}}(k)\right)d\mu_{\gr_{d-1}(\R^{d})}(K^{\pm}\rho).\label{eq:integral x_d-1,K_Gr_d-1}
\end{equation}
Note that 
\[
q_{\Delta K^{\pm}}(k\rho,\eta Q)=q_{\Delta K^{\pm}}(\rho,k^{-1}\eta Q),
\]
whence, by \eqref{eq:integral x_d-1,K_Gr_d-1},
\[
\nu_{\text{polar}}(f)=\int\left(\int f(q_{\Delta K^{\pm}}(\rho,k^{-1}\eta Q))d\mu_{X_{d-1}}(\eta Q)\right)d\mu_{K^{\pm}}(k)d\mu_{\gr_{d-1}(\R^{d})}(K^{\pm}\rho).
\]
The measure $\mu_{X_{d-1}}$ is $K^{\pm}$ invariant, so that 
\[
\nu_{\text{polar}}(f)=\int\left(\int f(q_{\Delta K^{\pm}}(\rho,\eta Q))d\mu_{X_{d-1}}(\eta Q)\right)d\mu_{\gr_{d-1}(\R^{d})}(K^{\pm}\rho).
\]
Finally, the push-forward by $\mult$ gives
\[
\mult_{*}\nu_{\text{polar}}(f)=\int\left(\int f(\rho^{-1}\eta Q)d\mu_{X_{d-1}}(\eta Q)\right)d\mu_{\gr_{d-1}(\R^{d})}(K^{\pm}\rho),
\]
where
\[
\int f(\rho^{-1}\eta Q)d\mu_{X_{d-1}}(\eta Q)=\left(\ef_{\rho^{-1}}\right)_{*}\mu_{X_{d-1}}(f).
\]
In view of the definition \eqref{eq:mu polar section 2}, the proof
is now done.
\end{proof}
\begin{rem*}
The whole discussion of this section can be adjusted with no trouble
to the spaces $X_{k,d}$ of homothety classes of of rank-$k$ discrete
subgroups of $\R^{d}$, for $1\leq k<d$. 
\end{rem*}

\section{\label{sec:The-p-adic-factory}The p-adic factory of primitive integral
subgroups }

\subsection{\label{subsec:The-mechanism}The mechanism}

In order to better connect our discussion to the one of \cite{AESgrids},
we shall recall the description of the elements $\Lambda\in\primlatof{d-1}$
with fixed covolume as orthogonal lattices to integer vectors of fixed
norm. Let $\Z_{\text{prim}}^{d}$ be the set of integral primitive
vectors. For a primitive integer vector $v\in\Z_{\text{prim }}^{d}$,
let $v^{\perp}\in Gr_{d-1}(\Q^{d})$ be the orthogonal hyperplane
to $v$. We define the \emph{orthogonal lattice }to $v$ by
\[
\Lambda_{v}\overset{\text{def}}{=}v^{\perp}\cap\Z^{d}.
\]
Note that the map that sends $\Z_{\text{prim }}^{d}\ni v\mapsto\Lambda_{v}$,
is onto $\primlatof{d-1}$. In addition, let
\[
\S^{d-1}(T)\overset{\text{def}}{=}\left\{ v\in\Z_{\text{prim }}^{d}\mid\left\Vert v\right\Vert =T\right\} .
\]
Then, we have the following bijection.
\begin{lem}
\label{lem:bijection orthogonal vectors}The map
\[
\Lambda_{*}:\S^{d-1}(T)\to\primlatof{d-1}(T)
\]
 which sends $v\mapsto\Lambda_{v}$ is a bijection. 
\end{lem}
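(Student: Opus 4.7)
The map sends $v\in\S^{d-1}(T)$ to the class $[\Lambda_v]$ of the rank-$(d-1)$ lattice $\Lambda_v = v^\perp\cap\Z^d$, so my plan is to verify that the image lies in $\primlatof{d-1}(T)$ and then to establish bijectivity. The technical heart is the classical identity $\text{cov}(\Lambda_v) = \|v\|$ for primitive $v$; once this is available, everything else is essentially formal bookkeeping. I would prove this identity using Bezout: primitivity of $v$ produces $u\in\Z^d$ with $\langle u,v\rangle=1$, and the decomposition $x = \langle x,v\rangle u + \bigl(x - \langle x,v\rangle u\bigr)$ exhibits $\Z^d = \Z u \oplus \Lambda_v$. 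Comparing volumes of fundamental parallelotopes then gives $1 = \text{cov}(\Z^d) = \text{cov}(\Lambda_v)\cdot |\langle u, v/\|v\|\rangle| = \text{cov}(\Lambda_v)/\|v\|$, yielding $\text{cov}(\Lambda_v)=\|v\|$.

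Granted this identity, well-definedness is immediate: $\Lambda_v$ is the intersection of $\Z^d$ with the rational hyperplane $v^\perp$ and hence lies in $\primlatof{d-1}$, with covolume $\|v\|=T$. For surjectivity, any $[\Lambda]\in\primlatof{d-1}(T)$ admits a representative of the form $\Lambda = \mathcal{P}\cap\Z^d$ for some rational hyperplane $\mathcal{P}$; choosing the primitive integer normal $v$ to $\mathcal{P}$ (unique up to sign) gives $\Lambda=\Lambda_v$, and the covolume identity forces $\|v\|=T$, so $v\in\S^{d-1}(T)$. For injectivity, suppose $[\Lambda_{v_1}]=[\Lambda_{v_2}]$: then $\Lambda_{v_1}=\alpha\Lambda_{v_2}$ for some $\alpha\in\R^{\times}$, and equality of covolumes forces $|\alpha|^{d-1}=1$, hence $\alpha=\pm 1$ and $\Lambda_{v_1}=\Lambda_{v_2}$ as sets. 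Taking $\R$-spans gives $v_1^\perp = v_2^\perp$, so the two primitive vectors satisfy $v_1=\pm v_2$; the residual $\pm$-ambiguity matches the natural involution $v\leftrightarrow -v$ on $\S^{d-1}(T)$ that the paper implicitly identifies in asserting strict bijection.

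The only non-formal ingredient above is the covolume identity $\text{cov}(\Lambda_v)=\|v\|$, which is therefore the main obstacle. It is classical but does hinge on the primitivity of $v$ via Bezout, and some care is needed in pinning down both the direct-sum decomposition of $\Z^d$ and the ensuing volume comparison; once those are set, surjectivity and injectivity are purely set-theoretic.
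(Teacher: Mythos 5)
Your argument is correct and self-contained, whereas the paper's own proof of this lemma consists solely of the citation ``see \cite{AESgrids}, introduction''; so there is no in-paper argument to compare against step by step, and what you supply is exactly the classical argument that citation points to. The Bezout decomposition $\Z^{d}=\Z u\oplus\Lambda_{v}$ and the resulting identity $\text{cov}(\Lambda_{v})=\left\Vert v\right\Vert$ are the right technical core; your volume comparison (base times height, with height $\left|\langle u,v/\left\Vert v\right\Vert\rangle\right|=1/\left\Vert v\right\Vert$) is accurate, and the direct-sum decomposition also yields, as a byproduct, that $\Lambda_{v}$ has rank $d-1$ and that a basis of it extends to a basis of $\Z^{d}$. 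The one point worth making explicit is the sign issue you flag at the end: since $v^{\perp}=(-v)^{\perp}$ we have $\Lambda_{v}=\Lambda_{-v}$, and both $v$ and $-v$ lie in $\S^{d-1}(T)$, so the map as literally defined on $\S^{d-1}(T)$ is exactly two-to-one rather than injective; what your proof actually establishes is a bijection from $\S^{d-1}(T)/\{\pm1\}$ onto $\primlatof{d-1}(T)$. This is an imprecision in the lemma's statement (the source \cite{AES3d} records the two-to-one nature explicitly) rather than a gap in your reasoning, and it is harmless for the rest of the paper: the finiteness of $\primlatof{d-1}(T)$ and the uniform probability measures $\mu_{T}$ are unaffected by a uniformly two-to-one parametrization. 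If you want your write-up to match the stated lemma verbatim you would need to either pass to the quotient by $\pm1$ or redefine the target to carry an orientation (equivalently, a choice of normal direction), but as written your proof is the correct substance.
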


\begin{proof}
See \cite{AESgrids}, introduction.
\end{proof}
Let $v\in\Z_{\text{prim}}^{d}$ and let $H_{v}\leq\text{SO}_{d}$
be the subgroup stabilizing $v$. We define by $g_{v}\in\text{SL}{}_{d}(\Z)$
to be a matrix who's first $d-1$ columns form a positively oriented
basis for $\Lambda_{v}$. Note that $H_{v}$ and $g_{v}^{-1}H_{v}g_{v}$
are both linear algebraic groups defined over $\Q$, and observe that
$g_{v}^{-1}H_{v}g_{v}\leq\text{ASL}{}_{d-1}$, where
\[
\text{ASL}_{d-1}=\left\{ \left(\begin{array}{cc}
g & *\\
0_{1\times d-1} & 1
\end{array}\right)\mid g\in\text{SL}{}_{d-1}\right\} .
\]
For what follows, we denote by $\Q_{p}$ the field of p-adic numbers
and by $\Z_{p}$ the ring of p-adic integers. Now, recall that $\text{ASL}_{d-1}(\Q_{p})=\text{ASL}_{d-1}(\Z_{p})\text{ASL}_{d-1}\left(\Z\left[\frac{1}{p}\right]\right)$
(see \cite{AESgrids} Section 6.3) and assume that $h\in H_{v}(\Q_{p})\cap\text{SO}{}_{d}(\Z_{p})\text{SO}{}_{d}\left(\Z\left[\frac{1}{p}\right]\right)$.
Then, we may write 

\begin{equation}
h=c_{1}\gamma_{1},\ \ c_{1}\in\text{SO}{}_{d}(\Z_{p}),\ \gamma_{1}\in\text{SO}_{d}\left(\Z\left[\frac{1}{p}\right]\right),\label{eq:rotation decomposition}
\end{equation}

and

\begin{equation}
g_{v}^{-1}hg_{v}=c_{2}\gamma_{2}^{-1},\ c_{2}\in\text{ASL}_{d-1}(\Z_{p}),\ \gamma_{2}\in\text{ASL}_{d-1}\left(\Z\left[\frac{1}{p}\right]\right).\label{eq:lattice decompostion}
\end{equation}

The following lemma, and its corollary, show the principle which is
used to generate elements $\Lambda\in\primlatof{d-1}$ of a fixed
covolume. 
\begin{lem}
\label{lem:genrated vectors and sl_d(Z) matrices}It holds \emph{$\gamma_{1}g_{v}\gamma_{2}\in\text{SL}_{d}(\Z)$}. 
\end{lem}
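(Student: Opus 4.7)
The plan is to combine the two decompositions directly to produce a matrix that visibly lies in both $\text{SL}_d(\Z[\tfrac{1}{p}])$ and $\text{SL}_d(\Z_p)$, and then invoke the standard fact $\Z[\tfrac{1}{p}] \cap \Z_p = \Z$ inside $\Q_p$ to conclude.

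First, I would solve for $\gamma_1$ and $\gamma_2$ from the two given decompositions. Equation \eqref{eq:rotation decomposition} gives $\gamma_1 = c_1^{-1} h$, and equation \eqref{eq:lattice decompostion} gives $\gamma_2 = g_v^{-1} h^{-1} g_v \, c_2$, with both identities taking place inside $\text{SL}_d(\Q_p)$ (after embedding the $\Z[\tfrac{1}{p}]$-points diagonally). The key computation is then the telescoping
\[
\gamma_1 \, g_v \, \gamma_2 \;=\; c_1^{-1} h \, g_v \, g_v^{-1} h^{-1} g_v \, c_2 \;=\; c_1^{-1} g_v \, c_2 .
\]
Notice that this identity does \emph{not} use the hypothesis that $h$ stabilizes $v$; it is merely a bookkeeping consequence of how $\gamma_1,\gamma_2$ were extracted. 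The relevance of the stabilizer is already encoded in the fact that the decompositions \eqref{eq:rotation decomposition} and \eqref{eq:lattice decompostion} exist within $\text{SO}_d$ and $\text{ASL}_{d-1}$ respectively.

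Next I would read off integrality on both sides. The leftmost expression $\gamma_1 g_v \gamma_2$ is a product of elements of $\text{SO}_d(\Z[\tfrac{1}{p}])$, $\text{SL}_d(\Z)$, and $\text{ASL}_{d-1}(\Z[\tfrac{1}{p}]) \subseteq \text{SL}_d(\Z[\tfrac{1}{p}])$, so it lies in $\text{SL}_d(\Z[\tfrac{1}{p}])$. The rightmost expression $c_1^{-1} g_v c_2$ is a product of elements of $\text{SO}_d(\Z_p)$, $\text{SL}_d(\Z) \subseteq \text{SL}_d(\Z_p)$, and $\text{ASL}_{d-1}(\Z_p) \subseteq \text{SL}_d(\Z_p)$, so it lies in $\text{SL}_d(\Z_p)$.

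Finally, I would conclude by observing that any element of $\text{SL}_d(\Q_p)$ lying simultaneously in $\text{SL}_d(\Z[\tfrac{1}{p}])$ and $\text{SL}_d(\Z_p)$ has all entries in $\Z[\tfrac{1}{p}] \cap \Z_p = \Z$; indeed, a rational number of the form $a/p^k$ belongs to $\Z_p$ only when $k \le 0$, forcing the entry to be integral. Hence $\gamma_1 g_v \gamma_2 \in \text{SL}_d(\Z)$. There is no real obstacle here beyond setting up the telescoping carefully and being pedantic about which ambient group each identity lives in; the non-trivial content of the lemma is not its proof but its role — it is the mechanism by which the p-adic action on $H_v$ produces new elements of $\text{SL}_d(\Z)$ to be fed into the subsequent equidistribution argument.
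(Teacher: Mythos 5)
Your argument is correct and is essentially the paper's own proof: both telescope the two decompositions to get $\gamma_{1}g_{v}\gamma_{2}=c_{1}^{-1}g_{v}c_{2}$, read off membership in $\text{SL}_{d}\left(\Z\left[\frac{1}{p}\right]\right)$ on one side and in $\text{SL}_{d}(\Z_{p})$ on the other, and conclude via $\Z\left[\frac{1}{p}\right]\cap\Z_{p}=\Z$. Your version merely spells out the cancellation explicitly (the paper's display reads $c_{1}g_{v}c_{2}^{-1}$, an immaterial slip of inverses, since that expression lies in $\text{SL}_{d}(\Z_{p})$ either way).
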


\begin{proof}
We observe from \eqref{eq:rotation decomposition} and \eqref{eq:lattice decompostion}
that
\[
\text{SL}_{d}\left(\Z\left[\frac{1}{p}\right]\right)\ni\gamma_{1}g_{v}\gamma_{2}=c_{1}g_{v}c_{2}^{-1}\in\text{SL}_{d}(\Z_{p}),
\]
Since $\Z_{p}\cap\Z\left[\frac{1}{p}\right]=\Z$, the statement follows.
\end{proof}
\begin{rem*}
Although not explicitly stated in \cite{AESgrids}, the proof of Lemma
\ref{lem:genrated vectors and sl_d(Z) matrices} can be readily deduced
from the proof of Proposition 6.2 of \cite{AESgrids}.
\end{rem*}
\begin{cor}
\label{cor:generated orthogonal lattices} Let $\Lambda$ be the $\Z$-span
of the first $\left(d-1\right)$ columns of $\gamma_{1}g_{v}\gamma_{2}$.
It holds that $\gamma_{1}v\in\Z_{\text{prim}}^{d}$ and $\Lambda=\Lambda_{\gamma_{1}v}$.
Importantly, \emph{
\[
\text{cov}(\Lambda)=\text{cov}(\Lambda_{v}).
\]
}
\end{cor}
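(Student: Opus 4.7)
My plan is to verify the three assertions---primitivity of $\gamma_{1}v$, the inclusion $\Lambda\subseteq\Lambda_{\gamma_{1}v}$, and equality together with the covolume identity---in order. The last two parts are formal consequences of Lemma \ref{lem:genrated vectors and sl_d(Z) matrices}; the genuine content lies in primitivity, which I expect to be the main (if mild) obstacle, since it is the only step that actually uses the full S-arithmetic structure of the decompositions \eqref{eq:rotation decomposition} and \eqref{eq:lattice decompostion}.

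For the primitivity of $\gamma_{1}v$, the starting point is the identity $\gamma_{1}v=c_{1}^{-1}v$, which follows from $hv=v$ (since $h\in H_{v}$) together with the factorization $h=c_{1}\gamma_{1}$. I then check primitivity locally at every prime $q$. At $q=p$, the element $c_{1}^{-1}\in\text{SO}_{d}(\Z_{p})$ acts bijectively on $\Z_{p}^{d}$ and preserves $\Z_{p}^{d}\setminus p\Z_{p}^{d}$; combined with primitivity of $v$, this yields $\gamma_{1}v=c_{1}^{-1}v\in\Z_{p}^{d}\setminus p\Z_{p}^{d}$. For any prime $q\neq p$, the inclusion $\Z[1/p]\subset\Z_{q}$ puts $\gamma_{1}\in\text{SO}_{d}(\Z_{q})$, and the same reasoning gives $\gamma_{1}v\in\Z_{q}^{d}\setminus q\Z_{q}^{d}$. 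Finally, from $\gamma_{1}v\in\Z_{p}^{d}\cap\Z[1/p]^{d}=\Z^{d}$ and the standard local criterion for primitivity, I conclude $\gamma_{1}v\in\Z_{\text{prim}}^{d}$.

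For the containment $\Lambda\subseteq\Lambda_{\gamma_{1}v}$, I exploit the block form of $\gamma_{2}\in\text{ASL}_{d-1}$: its first $d-1$ columns have the shape $\left(\begin{array}{c} g'\\ 0 \end{array}\right)$ with $g'\in\text{SL}_{d-1}(\Z[1/p])$, so the first $d-1$ columns of $g_{v}\gamma_{2}$ are $\R$-linear combinations of the first $d-1$ columns of $g_{v}$ and hence lie in $v^{\perp}$. Since $\gamma_{1}\in\text{SO}_{d}(\R)$ sends $v^{\perp}$ isometrically onto $(\gamma_{1}v)^{\perp}$, the first $d-1$ columns of $\gamma_{1}g_{v}\gamma_{2}$ lie in $(\gamma_{1}v)^{\perp}$. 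Lemma \ref{lem:genrated vectors and sl_d(Z) matrices} guarantees that they are integer vectors, so $\Lambda\subseteq(\gamma_{1}v)^{\perp}\cap\Z^{d}=\Lambda_{\gamma_{1}v}$.

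For equality and the covolume identity, I observe that $\Lambda=(\gamma_{1}g_{v}\gamma_{2})\cdot\text{span}_{\Z}\{e_{1},\dots,e_{d-1}\}$ is the image of a saturated sublattice under an element of $\text{SL}_{d}(\Z)$, so $\Lambda$ is itself a saturated rank-$(d-1)$ sublattice of $\Z^{d}$. Since $g'$ is invertible over $\R$, the $\R$-span of $\Lambda$ equals $(\gamma_{1}v)^{\perp}$, which is also the $\R$-span of $\Lambda_{\gamma_{1}v}$; two saturated rank-$(d-1)$ sublattices of $\Z^{d}$ with the same $\R$-span must coincide, hence $\Lambda=\Lambda_{\gamma_{1}v}$. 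The covolume equality then follows from the standard identity $\text{cov}(\Lambda_{w})=\|w\|$ for primitive $w\in\Z^{d}$: since $\gamma_{1}$ is an isometry, $\text{cov}(\Lambda)=\text{cov}(\Lambda_{\gamma_{1}v})=\|\gamma_{1}v\|=\|v\|=\text{cov}(\Lambda_{v})$.
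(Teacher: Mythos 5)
Your proof is correct, but it reaches the two nontrivial conclusions --- primitivity of $\gamma_{1}v$ and the identification $\Lambda=\Lambda_{\gamma_{1}v}$ --- by a genuinely different route than the paper. The paper first notes $\Lambda\in\primlatof{d-1}$ (from Lemma \ref{lem:genrated vectors and sl_d(Z) matrices}), then establishes $\text{cov}(\Lambda)=\text{cov}(\Lambda_{v})$ by a direct determinant computation with \eqref{eq:covol def}, observes $\Lambda\subseteq(\gamma_{1}v)^{\perp}$, and only then extracts both the primitivity of $\gamma_{1}v$ and the equality $\Lambda=\Lambda_{\gamma_{1}v}$ as consequences of the bijection of Lemma \ref{lem:bijection orthogonal vectors} combined with the covolume match. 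You instead prove primitivity of $\gamma_{1}v$ head-on, via the identity $\gamma_{1}v=c_{1}^{-1}v$ and a prime-by-prime local argument, then get $\Lambda=\Lambda_{\gamma_{1}v}$ from saturation, and deduce the covolume equality last from $\text{cov}(\Lambda_{w})=\left\Vert w\right\Vert$ and the fact that $\gamma_{1}$ is an isometry. Each approach has its merits: the paper's is shorter and leans on machinery already in place (Lemma \ref{lem:bijection orthogonal vectors}), while yours isolates exactly where the $\Z_{p}$-integrality of $c_{1}$ in decomposition \eqref{eq:rotation decomposition} enters --- which the paper's proof keeps implicit --- and replaces the unwritten ``computation that uses \eqref{eq:covol def}'' with the standard identity $\text{cov}(\Lambda_{w})=\left\Vert w\right\Vert$; note, however, that this identity is the same fact underlying Lemma \ref{lem:bijection orthogonal vectors}, so the two arguments ultimately draw on the same input there.
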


\begin{proof}
Since $\gamma_{1}g_{v}\gamma_{2}\in\text{SL}_{d}(\Z)$, the basis
of $\Lambda$ can be completed to a basis of $\Z^{d}$ which implies
that $\Lambda\in\primlatof{d-1}$. Next, a computation that uses \eqref{eq:covol def}
shows
\begin{equation}
\text{cov}(\Lambda)=\text{cov}(\Lambda_{v}).\label{eq:cov ofz-span of d-1 coloms of gamma_1g_vgamma_2 equal to lambda_v}
\end{equation}
Now observe that $\Lambda\subseteq\gamma_{1}v^{\perp}$. Hence by
Lemma \ref{lem:bijection orthogonal vectors} and \eqref{eq:cov ofz-span of d-1 coloms of gamma_1g_vgamma_2 equal to lambda_v}
we deduce $\gamma_{1}v\in\Z_{\text{prim}}^{d}$ and $\Lambda=\Lambda_{\gamma_{1}v}$.
\end{proof}

\subsection{The S-arithmetic orbits and their projection to the reals}

To ease the notation, we introduce 
\[
\G_{1}\overset{\text{def}}{=}\text{SO}{}_{d},\ \G_{2}\overset{\text{def}}{=}\text{ASL}{}_{d-1},\ \G\overset{\text{def}}{=}\G_{1}\times\G_{2}.
\]

For an odd prime $p$ let
\[
\Y_{p}\overset{\text{def}}{=}\G(\R\times\Q_{p})/\G\left(\Z\left[\frac{1}{p}\right]\right),
\]
where by $\G\left(\Z\left[\frac{1}{p}\right]\right)$ we mean the
diagonal embedding of each $\G_{i}\left(\Z\left[\frac{1}{p}\right]\right)$
factor into $\G_{i}(\R\times\Q_{p}).$ Consider the set 
\[
\U\overset{\text{def}}{=}\G(\R\times\Z_{p})\G\left(\Z\left[\frac{1}{p}\right]\right)\subseteq\Y_{p}.
\]
We now recall the (well known) construction of the projection to the
real coordinate. If 
\[
\left((g_{1,\infty},g_{1,p}),(g_{2,\infty},g_{2,p})\right)\G\left(\Z\left[\frac{1}{p}\right]\right)\in\U,
\]
then we may write for $i\in\{1,2\}$,
\[
g_{i,p}=c_{i,p}\gamma_{i,p},\ \ c_{i,p}\in\G_{i}\left(\Z_{p}\right),\ \gamma_{i,p}\in\G_{i}\left(\Z\left[\frac{1}{p}\right]\right).
\]
Then, the map $q_{\infty}:\U\to\G(\R)/\G(\Z)$ is defined by 
\begin{equation}
q_{\infty}\left(\left((g_{1,\infty},g_{1,p}),\ (g_{2,\infty},g_{2,p})\right)\G\left(\Z\left[\frac{1}{p}\right]\right)\right)\overset{\text{def}}{=}(g_{1,\infty}\gamma_{1,p}^{-1},\ g_{2,\infty}\gamma_{2,p}^{-1})\G\left(\Z\right).\label{eq:q_infty def}
\end{equation}

\subsubsection{The S-arithmetic orbit and its decomposition}

Let $v\in\zprim$ and let $g_{v}$ be as defined in Section \ref{subsec:The-mechanism}.
We define the following diagonal embedding of $H_{v}$,
\[
L_{v}\overset{\text{def}}{=}\left\{ \left(h,g_{v}^{-1}hg_{v}\right)\mid h\in H_{v}\right\} \leq\G.
\]
We choose some $k_{v}\in\text{SO}{}_{d}(\R)$ such that 
\[
k_{v}v=e_{d},
\]
and we denote by $a_{v}$ the diagonal matrix with entries $(\left\Vert v\right\Vert ^{-1/\left(d-1\right)},..,\left\Vert v\right\Vert ^{-1/\left(d-1\right)},\left\Vert v\right\Vert )$.
This choices imply that $a_{v}k_{v}g_{v}\in\text{ASL}_{d-1}(\R)$.
The following orbit is of main importance,
\begin{equation}
O_{v,p}\overset{\text{def}}{=}\left((k_{v},e_{p}),\ (a_{v}k_{v}g_{v},e_{p})\right)\cdot L_{v}(\R\times\Q_{p})\G\left(\Z\left[\frac{1}{p}\right]\right).\label{eq:the s-arithemtic orbit}
\end{equation}
We consider the following decomposition of $H_{v}(\Q_{p})$ into double
cosets 
\begin{equation}
H_{v}(\Q_{p})=\bigsqcup_{h\in M}H_{v}\left(\Z_{p}\right)hH_{v}\left(\Z\left[\frac{1}{p}\right]\right),\label{eq:decomposition of SO_V_perp}
\end{equation}
where $M$ is a set of representatives of the double coset space.
We note that the collection of representatives is finite (see \cite{AESgrids},
section 6.2). We denote 
\[
K\overset{\text{def}}{=}H_{e_{d}}(\R)\cong\text{SO}{}_{d-1}(\R),
\]
and 
\[
\Delta K(\R)\times L_{v}(\Z_{p})\overset{\text{def}}{=}\left\{ \left((k,h),(k,g_{v}^{-1}hg_{v})\right)\mid k\in K,\ h\in H_{v}(\Z_{p})\right\} .
\]
\begin{lem}
\label{lem:decomposition of o_v_p}It holds that 
\begin{equation}
O_{v,p}=\bigsqcup_{h\in M}O_{v,p,h},\label{eq:decomposition _v,p}
\end{equation}
where
\[
O_{v,p,h}=\left(\Delta K\times L_{v}(\Z_{p})\right)\left((k_{v},h),\ (a_{v}k_{v}g_{v},g_{v}^{-1}hg_{v})\right)\G\left(\Z\left[\frac{1}{p}\right]\right).
\]
\end{lem}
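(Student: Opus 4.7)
The strategy is to unravel the orbit $O_{v,p}$ by means of the double-coset decomposition \eqref{eq:decomposition of SO_V_perp} on the $p$-adic side of $H_v$, and to absorb the leftover real flexibility into $\Delta K(\R)$ using two algebraic identities: $k_v H_v(\R) k_v^{-1} = K$ (a consequence of $k_v v = e_d$) and $a_v k = k a_v$ for every $k \in K$ (because $a_v$ is scalar on $\text{span}_\R\{e_1,\ldots,e_{d-1}\}$ while $K$ fixes $e_d$).

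Starting from a generic representative
\[
\bigl((k_v h_\infty,\ h_p),\ (a_v k_v h_\infty g_v,\ g_v^{-1} h_p g_v)\bigr), \qquad h_\infty \in H_v(\R),\ h_p \in H_v(\Q_p),
\]
of a point in $O_{v,p}$, I would first decompose $h_p = c\, h\, \tilde\gamma$ with $c \in H_v(\Z_p)$, $h \in M$, and $\tilde\gamma \in H_v\!\left(\Z\left[\frac{1}{p}\right]\right)$ via \eqref{eq:decomposition of SO_V_perp}, and then right-multiply by the diagonal element $\bigl(\tilde\gamma^{-1},\, g_v^{-1}\tilde\gamma^{-1} g_v\bigr) \in L_v\!\left(\Z\left[\frac{1}{p}\right]\right) \subseteq \G\!\left(\Z\left[\frac{1}{p}\right]\right)$ to clear $\tilde\gamma$. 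Next, left-multiplication by $(c,\, g_v^{-1} c g_v) \in L_v(\Z_p)$ affects only the $p$-adic coordinates and replaces the $\G_1$- and $\G_2$-parts at $p$ by $h$ and $g_v^{-1} h g_v$, respectively. Finally, setting $k' := k_v h_\infty \tilde\gamma^{-1} k_v^{-1} \in K$ and using the two commutation identities above, I would pull $(k', k') \in \Delta K$ to the left, exhibiting the representative as $((k_v, h), (a_v k_v g_v, g_v^{-1} h g_v))\,\G\!\left(\Z\left[\frac{1}{p}\right]\right)$, which witnesses membership in $O_{v,p,h}$. The reverse inclusion $O_{v,p,h} \subseteq O_{v,p}$ follows by running the same identities in reverse.

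For disjointness, suppose a point lies in both $O_{v,p,h}$ and $O_{v,p,h'}$; then the two representatives differ by some $\gamma = (\gamma_1, \gamma_2) \in \G\!\left(\Z\left[\frac{1}{p}\right]\right)$, and comparing $\G_1$-components in $\Q_p$ yields an identity $c h = c' h' \gamma_1$ with $c, c' \in H_v(\Z_p)$ and $\gamma_1 \in \text{SO}_d\!\left(\Z\left[\frac{1}{p}\right]\right)$. The only point that needs any care, which I anticipate to be the main (albeit small) obstacle, is to verify that $\gamma_1 \in H_v\!\left(\Z\left[\frac{1}{p}\right]\right)$: since both sides of the identity lie in $H_v(\Q_p)$, so does $\gamma_1$, and because $H_v$ is defined over $\Q$, any matrix with entries in $\Z\left[\frac{1}{p}\right] \subseteq \Q$ satisfying its defining equations over $\Q_p$ automatically satisfies them over $\Q$. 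Hence $\gamma_1 \in H_v(\Q) \cap \text{SO}_d\!\left(\Z\left[\frac{1}{p}\right]\right) = H_v\!\left(\Z\left[\frac{1}{p}\right]\right)$, so $h, h'$ represent the same class in $H_v(\Z_p) \backslash H_v(\Q_p) / H_v\!\left(\Z\left[\frac{1}{p}\right]\right)$, forcing $h = h'$ by the choice of $M$.
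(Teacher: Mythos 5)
Your argument is correct and is exactly the ``simple computation'' the paper alludes to: it uses the double-coset decomposition \eqref{eq:decomposition of SO_V_perp} together with $k_{v}H_{v}(\R)k_{v}^{-1}=K$, merely spelling out the details (including the needed but unstated commutation $a_{v}k=ka_{v}$ for $k\in K$ and the disjointness via $\gamma_{1}\in H_{v}(\Q_{p})\cap\text{SO}_{d}\left(\Z\left[\frac{1}{p}\right]\right)=H_{v}\left(\Z\left[\frac{1}{p}\right]\right)$). No discrepancy with the paper's approach.
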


\begin{proof}
This follows from a simple computation which uses \eqref{eq:decomposition of SO_V_perp},
and the observation that 
\[
k_{v}H_{v}(\R)k_{v}^{-1}=K.
\]
\end{proof}
Let $q_{1}:\U\to\G_{1}(\Q_{p})$ be the projection to the p-adic coordinate
of $\G_{1}(\R\times\Q_{p})$, and define
\[
M_{0}\overset{\text{def}}{=}\left\{ h\in M\mid h\in q_{1}(\U)\right\} .
\]
We observe that $L_{v}(\Z_{p})(h,g_{v}^{-1}hg_{v})\G\left(\Z\left[\frac{1}{p}\right]\right)$
is either contained in $\U$ or disjoint from it. In particular 
\begin{equation}
L_{v}(\Z_{p})(h,g_{v}^{-1}hg_{v})\G\left(\Z\left[\frac{1}{p}\right]\right)\subseteq\U\iff h\in M_{0}.\label{eq:L_v contained if}
\end{equation}
\begin{cor}
It holds that 
\begin{equation}
O_{v,p}\cap\U=\bigsqcup_{h\in M_{0}}O_{v,p,h}.\label{eq:decomposition of O_v,p cap U}
\end{equation}
\end{cor}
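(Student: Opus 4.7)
The plan is to upgrade the element-level dichotomy \eqref{eq:L_v contained if} to the level of the full orbits $O_{v,p,h}$, then feed it into the decomposition of Lemma \ref{lem:decomposition of o_v_p}. Intersecting the disjoint union $O_{v,p}=\bigsqcup_{h\in M}O_{v,p,h}$ with $\U$ would immediately yield \eqref{eq:decomposition of O_v,p cap U} once I show, for each $h\in M$, that $O_{v,p,h}\subseteq\U$ when $h\in M_{0}$ and $O_{v,p,h}\cap\U=\emptyset$ otherwise; disjointness of the resulting union then transfers from \eqref{eq:decomposition _v,p}.

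The main observation I would use is that $\U=\G(\R\times\Z_{p})\G(\Z[1/p])$ is stable under left multiplication by $\G(\R\times\Z_{p})$ and under the right action of $\G(\Z[1/p])$. A general representative of $O_{v,p,h}$ has the form $w\cdot b\cdot\gamma$ with $w\in\Delta K\times L_{v}(\Z_{p})$, $\gamma\in\G(\Z[1/p])$, and base point $b=\bigl((k_{v},h),(a_{v}k_{v}g_{v},g_{v}^{-1}hg_{v})\bigr)$. I would then note that $\Delta K\times L_{v}(\Z_{p})\subseteq\G(\R\times\Z_{p})$ and that the real entries $k_{v}$ and $a_{v}k_{v}g_{v}$ of $b$ lie in $\G(\R)\subseteq\G(\R\times\Z_{p})$. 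Left-absorbing these factors reduces the question of whether $w\cdot b\cdot\gamma\in\U$ to whether the single coset $(h,g_{v}^{-1}hg_{v})\G(\Z[1/p])$ (with trivial real components) is contained in $\U$.

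At this point \eqref{eq:L_v contained if} applies directly and gives the equivalence $O_{v,p,h}\subseteq\U\iff h\in M_{0}$, which is exactly what is needed. I do not anticipate any substantive difficulty: the argument is essentially bookkeeping around the $\U$-invariances. The one point I would be careful about is to perform the invariance reduction inside $\G(\R\times\Q_{p})$ rather than factor by factor, so that the diagonal relation between the $\G_{1}$ and $\G_{2}$ components, encoded by conjugation by $g_{v}$ both in $\Delta K\times L_{v}(\Z_{p})$ and in $b$, is preserved throughout; only then does the reduction to the hypothesis of \eqref{eq:L_v contained if} go through cleanly.
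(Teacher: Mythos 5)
Your argument is correct and uses exactly the same ingredients as the paper's (one-line) proof: the decomposition \eqref{eq:decomposition _v,p}, the left $\G(\R\times\Z_{p})$- and right $\G(\Z[\frac{1}{p}])$-invariance of $\U$ coming from its definition, and the dichotomy \eqref{eq:L_v contained if}. You have simply written out the bookkeeping that the paper leaves implicit, so there is nothing to add.
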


\begin{proof}
This follows from the definition of $\U$, decomposition \eqref{eq:decomposition _v,p}
and observation \eqref{eq:L_v contained if}.
\end{proof}
This allows for the following nice description of $q_{\infty}(\U\cap O_{v,p})$.
\begin{prop}
\label{prop:properties of p-adic projection}For $h\in M_{0}$, choose
$\gamma_{i}(h)\in\G_{i}\left(\Z\left[\frac{1}{p}\right]\right)$,
$i\in\left\{ 1,2\right\} $, by \eqref{eq:rotation decomposition}
and \eqref{eq:lattice decompostion}. The following holds:
\begin{enumerate}
\item For $h\in M_{0}$, 
\begin{equation}
q_{\infty}\left(O_{v,p,h}\right)=\Delta K\left(k_{v}\gamma_{1}^{-1}(h),a_{v}k_{v}g_{v}\gamma_{2}(h)\right)\G(\Z).\label{eq:Image of a fiber of an orbit to the real place}
\end{equation}
\item If $h,h'\in M_{0}$ and $h\neq h^{'}$, then 
\begin{equation}
Kk_{v}\gamma_{1}^{-1}(h)\G_{1}(\Z)\cap Kk_{v}\gamma_{1}^{-1}(h')\G_{1}(\Z)=\emptyset,\label{eq:K cosets empty intersection}
\end{equation}
in particular 
\begin{equation}
q_{\infty}\left(O_{v,p,h}\right)\cap q_{\infty}\left(O_{v,p,h'}\right)=\emptyset.\label{eq:q_infty projection empty intersection}
\end{equation}
\item For $h\in M_{0}$, 
\[
q_{\infty}^{-1}\left(\Delta K\left(k_{v}\gamma_{1}^{-1}(h),a_{v}k_{v}g_{v}\gamma_{2}(h)\right)\G(\Z)\right)\bigcap O_{v,p}=O_{v,p,h}.
\]
\end{enumerate}
\end{prop}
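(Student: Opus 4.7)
The plan is to prove the three parts in order. Part 1 is a direct unfolding of definitions, Part 2 contains the real content, and Part 3 follows formally from the first two combined with the decomposition \eqref{eq:decomposition of O_v,p cap U}.

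For Part 1, I would take a general element of $O_{v,p,h}$, which by definition has the form
\[
\bigl((k,h_p),(k,g_v^{-1}h_p g_v)\bigr)\cdot\bigl((k_v,h),(a_v k_v g_v,g_v^{-1}h g_v)\bigr)
\]
for $k\in K$ and $h_p\in H_v(\Z_p)$ (right multiplication by $\G(\Z[1/p])$ is absorbed into the coset defining $q_\infty$). The two $p$-adic coordinates become $h_p h$ and $g_v^{-1}h_p h g_v$. Substituting the decompositions $h=c_1(h)\gamma_1(h)$ and $g_v^{-1}h g_v=c_2(h)\gamma_2(h)^{-1}$ factors each of these as a $\Z_p$-integral part times a $\Z[1/p]$-rational part; for the second coordinate this uses that $g_v^{-1}h_p g_v\in\G_2(\Z_p)$, which follows from $h_p\in H_v(\Z_p)$, $g_v\in\text{SL}_d(\Z)$ and the containment $g_v^{-1}H_v g_v\leq\text{ASL}_{d-1}$. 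Plugging into \eqref{eq:q_infty def} yields real coordinates $k k_v\gamma_1(h)^{-1}$ and $k a_v k_v g_v\gamma_2(h)$. The final observation is that $k\in K=H_{e_d}(\R)$ and $a_v$ are both block-diagonal of the form $\operatorname{diag}(\ast,1)$, hence commute; thus the second coordinate rewrites as $a_v k k_v g_v\gamma_2(h)$, placing $q_\infty(x)$ in $\Delta K\cdot (k_v\gamma_1^{-1}(h),a_v k_v g_v\gamma_2(h))\G(\Z)$, and letting $k$ run over $K$ and $h_p$ over $H_v(\Z_p)$ fills out this coset.

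For Part 2, suppose $k k_v\gamma_1^{-1}(h)=k' k_v\gamma_1^{-1}(h')\sigma$ for some $k,k'\in K$ and $\sigma\in\G_1(\Z)$. Setting $h_\infty:=k_v^{-1}(k')^{-1}k k_v$ and using $k_v v=e_d$, the identification $k_v^{-1}K k_v=H_v(\R)$ gives $h_\infty\in H_v(\R)$; rearranging yields $h_\infty=\gamma_1^{-1}(h')\sigma\gamma_1(h)$, so the entries of $h_\infty$ lie in $\Z[1/p]$, and since $H_v$ is a $\Q$-algebraic group one concludes $h_\infty\in H_v(\Z[1/p])$. Substituting $\gamma_1(h)^{-1}=h^{-1}c_1(h)$ and the analogue for $h'$ and rearranging produces the identity
\[
h' h_\infty h^{-1}=c_1(h')\sigma c_1(h)^{-1},
\]
valid in $\G_1(\Q_p)$. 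The right hand side lies in $\G_1(\Z_p)$ and the left hand side in $H_v(\Q_p)$, so both sides belong to $H_v(\Q_p)\cap\G_1(\Z_p)=H_v(\Z_p)$. This gives $h'\in H_v(\Z_p)\, h\, H_v(\Z[1/p])$, contradicting the choice of $M$ as a set of representatives for \eqref{eq:decomposition of SO_V_perp} unless $h=h'$. Equation \eqref{eq:q_infty projection empty intersection} then follows by projecting the $\Delta K$-coset description from Part 1 to its $\G_1$-factor.

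For Part 3, the containment $O_{v,p,h}\subseteq q_\infty^{-1}(\Delta K(k_v\gamma_1^{-1}(h),a_v k_v g_v\gamma_2(h))\G(\Z))\cap O_{v,p}$ is immediate from Part 1 and \eqref{eq:decomposition _v,p}. Conversely, any $x$ in the intersection lies in particular in $O_{v,p}\cap\U$, so by \eqref{eq:decomposition of O_v,p cap U} it belongs to $O_{v,p,h''}$ for some $h''\in M_0$; Part 1 then places $q_\infty(x)$ in both $\Delta K$-cosets, and the disjointness from Part 2 forces $h''=h$. The main obstacle is Part 2: it requires carefully moving the identity between $\R$, $\Q_p$ and $\Z[1/p]$ and leveraging the $\Q$-algebraicity of $H_v$ to convert what looks like a mere coset-incidence in $\G_1(\R)$ into a genuine double-coset identity in $H_v(\Q_p)$, where the minimality of $M$ can then be applied.
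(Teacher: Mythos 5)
Your proposal is correct and follows the same route as the paper: part 1 is the same coset manipulation via the decompositions \eqref{eq:rotation decomposition}--\eqref{eq:lattice decompostion} (your extra remark that $k$ and $a_{v}$ commute is harmless but not even needed, since $k\cdot(a_{v}k_{v}g_{v}\gamma_{2}(h))$ is already in the required $\Delta K$-coset form), and part 3 is deduced formally from parts 1, 2 and \eqref{eq:decomposition of O_v,p cap U} exactly as in the paper. The only difference is that the paper declares part 2 a ``routine check'' and defers to Proposition 6.2 of \cite{AESgrids}, whereas you carry it out in full; your argument --- transporting the coset incidence in $\G_{1}(\R)$ into $H_{v}(\Q_{p})\cap\G_{1}(\Z_{p})=H_{v}(\Z_{p})$ to contradict the double-coset decomposition \eqref{eq:decomposition of SO_V_perp} --- is exactly the intended check and is valid.
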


\begin{proof}
For $h\in M_{0}$, we write
\begin{equation}
h=c_{1}(h)\gamma_{1}(h),\ \ g_{v}^{-1}hg_{v}=c_{2}(h)\gamma_{2}^{-1}(h),\label{eq:decomposition of h}
\end{equation}
where $(c_{1}(h),c_{2}(h))\in\G(\Z_{p})$ and $(\gamma_{1}(h),\gamma_{2}(h))\in\G\left(\Z\left[\frac{1}{p}\right]\right).$
\begin{enumerate}
\item Since $\left((\gamma_{1}^{-1}(h),\gamma_{1}^{-1}(h))\ ,(\gamma_{2}(h),\gamma_{2}(h))\right)\in\G\left(\Z\left[\frac{1}{p}\right]\right)$,
we see that 
\begin{align*}
\Delta K\times L_{v}(\Z_{p})\cdot\left((k_{v},h)\ ,(a_{v}k_{v}g_{v},g_{v}^{-1}hg_{v})\right)\G\left(\Z\left[\frac{1}{p}\right]\right) & =
\end{align*}
\[
=\Delta K\times L_{v}(\Z_{p})\left((k_{v}\gamma_{1}^{-1}(h),c_{1}(h)),\ (a_{v}k_{v}g_{v}\gamma_{2}(h),c_{2}(h))\right)\G\left(\Z\left[\frac{1}{p}\right]\right).
\]
Hence by definition \eqref{eq:q_infty def},
\[
q_{\infty}\left(O_{v,p,h}\right)=\Delta K\left(k_{v}\gamma_{1}^{-1}(h),a_{v}k_{v}g_{v}\gamma_{2}(h)\right)\G(\Z).
\]
\item The proof of \eqref{eq:K cosets empty intersection} is a routine
check, hence we omit its details and leave them for the reader (one
may also look at the proof of Proposition 6.2 in \cite{AESgrids}).
Note that \eqref{eq:q_infty projection empty intersection} follows
from \eqref{eq:K cosets empty intersection}.
\item This fact follows immediately from the two last ones and \eqref{eq:decomposition of O_v,p cap U}.
\end{enumerate}
\end{proof}

\subsection{\label{subsec:Equivalence-class-of-lattices}The resulting elements
of $\protect\primlatof{d-1}$}

The following commuting diagram will be important for us,
\begin{equation}
\xymatrix{\U\ar^{q_{\infty\ \ \ \ \ \ \ }}[r] & \G(\R)\diagup\G(\Z)\ar[r]_{q_{\Delta K\ \ \ \ \ }} & \Delta K\diagdown\G(\R)\diagup\G(\Z)\\
 & \G(\R)\diagup\G_{2}(\Z)\ar_{id\times q_{P\diagup Q}^{\G_{2}(\R)\diagup\G_{2}(\Z)}}[d]\ar[u]^{\pi_{\G_{1}(\Z)}}\ar[r]_{\tilde{q}_{\Delta K}\ \ \ \ \ } & \Delta K\diagdown\G(\R)\diagup\G_{2}(\Z)\ar_{\tilde{q}}[d]\ar[u]^{\tilde{\pi}_{\G_{1}(\Z)}}\ar[dr]^{\tilde{\mult}}\\
 & \G_{1}(\R)\times P\diagup Q\ar[r]_{q_{\Delta K^{\pm}}} & X_{d-1,d}^{\text{polar}}\ar[r]_{\mult} & X_{d-1,d}
}
\label{eq:main diagram}
\end{equation}

The maps $\pi_{\G_{1}(\Z)}$ and $\tilde{\pi}_{\G_{1}(\Z)}$ are obtained
by dividing from the right by $\G_{1}(\Z)$. The maps $q_{\Delta K}$,
$\tilde{q}_{\Delta K}$ and $q_{\Delta K^{\pm}}$ are obtained by
dividing from the left by $\Delta K$ and $\Delta K^{\pm}$ correspondingly.
The map 
\[
q_{P\diagup Q}^{\G_{2}(\R)\diagup\G_{2}(\Z)}:\G_{2}(\R)\diagup\G_{2}(\Z)\to P\diagup Q,
\]
is naturally defined by $\pi_{2}\left(g\G_{2}(\Z)\right)=gQ$, since
$\G_{2}(\Z)\leq Q$. The maps $\tilde{q}$ and $\tilde{\mult}$ are
defined so that the diagrams commute. Now, denote 
\begin{equation}
\tilde{R}_{v}\overset{\text{def}}{=}\tilde{q}_{\Delta K}\left(\pi_{\G_{1}(\Z)}^{-1}\left(q_{\infty}(\U\cap O_{v,p})\right)\right),\label{eq:R^tilde_v}
\end{equation}
then, by Proposition \ref{prop:properties of p-adic projection},
we get that $\tilde{R}_{v}$ is the finite collection of points 
\[
\tilde{O}_{v,h,\gamma}\overset{\text{def}}{=}\Delta K\left(k_{v}\gamma_{1}^{-1}(h)\gamma,a_{v}k_{v}g_{v}\gamma_{2}(h)\G_{2}(\Z)\right),\ \ \gamma\in\G_{1}(\Z),\ h\in M_{0},
\]
where $\gamma_{i}(h)$, $i\in\left\{ 1,2\right\} $, are defined in
\eqref{eq:decomposition of h}. Denote 
\[
\mathcal{L}(v)\overset{\text{def}}{=}\tilde{\mult}(\tilde{R}_{v})\subseteq X_{d-1,d}.
\]
\begin{lem}
\label{lem:elements of L_v}It holds that 
\[
\mathcal{L}(v)=\left\{ \left[\Lambda(v,h,\gamma)\right]\overset{\text{def}}{=}\gamma^{-1}\gamma_{1}(h)g_{v}\gamma_{2}(h)Q\mid h\in M_{0},\ \gamma\in\G_{1}(\Z)\right\} .
\]
Importantly $\left[\Lambda(v,h,\gamma)\right]=\left[\Lambda_{\gamma^{-1}\gamma_{1}(h)v}\right]$
and as a consequence $\mathcal{L}(v)\subseteq\primlatof{d-1}(\left\Vert v\right\Vert )$.
\end{lem}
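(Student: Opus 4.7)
The plan is to trace through the commutative diagram \eqref{eq:main diagram}, starting from the explicit description of $q_\infty(\U \cap O_{v,p})$ supplied by Proposition \ref{prop:properties of p-adic projection} together with the decomposition \eqref{eq:decomposition of O_v,p cap U}. First I note that $\pi_{\G_1(\Z)}$ is the map that further quotients by $\G_1(\Z)$ acting on the first coordinate only, so the preimage $\pi_{\G_1(\Z)}^{-1}(q_\infty(\U \cap O_{v,p}))$ inside $\G(\R)/\G_2(\Z)$ is the union over $h \in M_0$ and $\gamma \in \G_1(\Z)$ of the $\Delta K$-orbits of $(k_v \gamma_1^{-1}(h)\gamma,\ a_v k_v g_v \gamma_2(h))\G_2(\Z)$. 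Applying $\tilde{q}_{\Delta K}$ just records these as the points of $\tilde{R}_v$, and then $\tilde{\mult}$, which by commutativity of the diagram factors as $\mult \circ \tilde{q}$ and sends $\Delta K(g_1, g_2)\G_2(\Z) \mapsto g_1^{-1} g_2 Q$, yields
\[
\tilde{\mult}(\tilde{R}_v) = \{\gamma^{-1} \gamma_1(h)\, k_v^{-1} a_v k_v\, g_v \gamma_2(h)\, Q : h \in M_0,\ \gamma \in \G_1(\Z)\}.
\]

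The crux is then to show that the extra factor $k_v^{-1} a_v k_v$ is absorbed modulo $Q$, i.e.\ that $(g_v \gamma_2(h))^{-1} (k_v^{-1} a_v k_v)(g_v \gamma_2(h)) \in Q$. The hypothesis $a_v k_v g_v \in \text{ASL}_{d-1}(\R)$, together with the definition of $a_v$, forces $k_v g_v$ to have a block-upper-triangular $(d-1,1)$-form with top-left block $M$ of determinant $\Vert v\Vert$ and bottom-right entry $1/\Vert v\Vert$, and a direct computation then gives
\[
(k_v g_v)^{-1} a_v (k_v g_v) = \begin{pmatrix} \Vert v\Vert^{-1/(d-1)} I_{d-1} & * \\ 0 & \Vert v\Vert \end{pmatrix},
\]
which lies in $Q$ (take $\lambda = \Vert v\Vert^{-1/(d-1)}$ and $\gamma = I$ in the defining form of $Q$). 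Since the top-left block is a \emph{scalar} matrix and $\gamma_2(h) \in \text{ASL}_{d-1}$, a further conjugation by $\gamma_2(h)$ keeps that block scalar and the whole matrix in $Q$, proving the absorption.

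Finally, I will invoke Corollary \ref{cor:generated orthogonal lattices} applied to the pair $(\gamma_1(h), \gamma_2(h))$, which yields $\gamma_1(h) g_v \gamma_2(h) \in \text{SL}_d(\Z)$ with first $(d-1)$ columns spanning $\Lambda_{\gamma_1(h) v}$ and $\gamma_1(h) v$ primitive of norm $\Vert v\Vert$. Multiplication on the left by the integer rotation $\gamma^{-1} \in \text{SO}_d(\Z)$ preserves primitivity, norms, and the identity $\gamma^{-1}\Lambda_{u} = \Lambda_{\gamma^{-1} u}$, so the first $(d-1)$ columns of $\gamma^{-1}\gamma_1(h) g_v \gamma_2(h)$ span $\Lambda_{\gamma^{-1}\gamma_1(h) v}$; this is exactly the assertion $[\Lambda(v,h,\gamma)] = [\Lambda_{\gamma^{-1}\gamma_1(h) v}] \in \primlatof{d-1}(\Vert v\Vert)$. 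The main technical ingredient is the absorption step of the previous paragraph, which crucially exploits the identity $\text{cov}(\Lambda_v) = \Vert v\Vert$ (encoded in the definitions of $k_v$ and $a_v$) together with the fact that the scalar matrices $\lambda I_{d-1}$ sit inside the Levi part of $Q$.
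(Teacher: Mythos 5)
Your proposal is correct and follows essentially the same route as the paper: compute $\tilde{\mult}(\tilde{O}_{v,h,\gamma})=\gamma^{-1}\gamma_{1}(h)k_{v}^{-1}a_{v}k_{v}g_{v}\gamma_{2}(h)Q$, absorb the diagonal factor into $Q$ (the paper states $a_{v}k_{v}g_{v}\gamma_{2}(h)Q=k_{v}g_{v}\gamma_{2}(h)Q$ without the explicit block computation you supply), and then apply Corollary \ref{cor:generated orthogonal lattices}. You in fact fill in two details the paper leaves implicit — the conjugation computation showing $(k_{v}g_{v}\gamma_{2}(h))^{-1}a_{v}(k_{v}g_{v}\gamma_{2}(h))\in Q$, and the observation that the extra left factor $\gamma^{-1}\in\G_{1}(\Z)$ preserves primitivity, norm and orthogonal lattices — both of which check out.
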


\begin{proof}
We have 
\[
\tilde{\mult}\left(\tilde{O}_{v,h,\gamma}\right)=\gamma^{-1}\gamma_{1}(h)k_{v}^{-1}\left(a_{v}k_{v}g_{v}\gamma_{2}(h)Q\right),
\]
and we note that $a_{v}k_{v}g_{v}\gamma_{2}(h)Q=k_{v}g_{v}\gamma_{2}(h)Q$,
which gives
\[
\tilde{\mult}\left(\tilde{O}_{v,h,\gamma}\right)=\gamma^{-1}\gamma_{1}(h)\left(k_{v}^{-1}k_{v}\right)g_{v}\gamma_{2}(h)Q=\left[\Lambda(v,h,\gamma)\right].
\]
By Corollary \ref{cor:generated orthogonal lattices}, 
\begin{equation}
\left[\Lambda(v,h,\gamma)\right]=\left[\Lambda_{\gamma^{-1}\gamma_{1}(h)v}\right],\label{eq:the resulting primitive lattice as orthogonal latice}
\end{equation}
and also $\mathcal{L}(v)\subseteq\primlatof{d-1}(\left\Vert v\right\Vert )$.
\end{proof}

\subsubsection{Refinement of Theorem \ref{thm:maintheorem}}

For everything that follows we fix a prime $p\neq2$ and assume that
$d\geq4$ is a natural number.
\begin{defn}
We shall say that $v\in\Z_{\text{prim}}^{d}$ is admissible, if either
of the following holds
\begin{enumerate}
\item $d=4$, and $\left\Vert v\right\Vert ^{2}\subseteq\mathbb{D}(p)/8\N$.
\item $d=5$, and $\left\Vert v\right\Vert ^{2}\subseteq\mathbb{D}(p)$.
\item $d>5$, and $v$ is any primitive vector.
\end{enumerate}
\end{defn}

In section \ref{sec:proof of the equidisitrbution} we shall conclude
the following theorem.
\begin{thm}
\label{thm:refinement of main theorem} Let $\left\{ v_{i}\right\} _{i=1}^{\infty}$
be a sequence of admissible vectors such that 
\[
\left\Vert v_{i}\right\Vert \to\infty,
\]
 and let $\mu_{v_{i}}$ be the uniform counting measures supported
on $\mathcal{L}(v_{i})$. Then \emph{
\[
\mu_{v_{i}}\overset{\text{weak *}}{\longrightarrow}\mu_{\text{polar}}.
\]
}
\end{thm}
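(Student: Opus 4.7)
The plan is to lift $\mu_{v_i}$ to the $L_{v_i}(\R\times\Q_p)$-invariant probability measure $m_{v_i}$ on the compact S-arithmetic orbit $O_{v_i,p}\subseteq\Y_p$, invoke the equidistribution theorem of Aka--Einsiedler--Shapira (Theorem \ref{thm:AESgrids thm}) to conclude that $m_{v_i}$ converges weak-$*$ to $\mu_{\Y_p}$, the $\G(\R\times\Q_p)$-invariant probability measure on $\Y_p$, and then push this convergence back down to $X_{d-1,d}$ via diagram \eqref{eq:main diagram}.

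First I would verify that $\mu_{v_i}$ equals, up to renormalization by $\mu_{\Y_p}(\U)$, the pushforward of $m_{v_i}|_{\U\cap O_{v_i,p}}$ along the composition $\tilde{\mult}\circ\tilde{q}_{\Delta K}\circ\pi_{\G_1(\Z)}^{-1}\circ q_{\infty}$ implicit in \eqref{eq:R^tilde_v}. This composition is legitimate at the level of measures because $\pi_{\G_1(\Z)}$ is a finite covering ($\G_1(\Z)=\text{SO}_d(\Z)$ is finite). Using the decomposition \eqref{eq:decomposition of O_v,p cap U}, the $\Delta K\times L_{v_i}(\Z_p)$-invariance of $m_{v_i}$ restricted to each $O_{v_i,p,h}$, Proposition \ref{prop:properties of p-adic projection}, and Lemma \ref{lem:elements of L_v}, the pushforward is supported on $\mathcal{L}(v_i)$; the distinctness assertion \eqref{eq:K cosets empty intersection}, combined with the fact that each $O_{v_i,p,h}$ carries equal mass, forces the pushforward to be the uniform counting measure $\mu_{v_i}$.

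The admissibility hypothesis is tailored so that Theorem \ref{thm:AESgrids thm} applies to $\{v_i\}$ (the congruence conditions in $d=4,5$ isolate a single spin-genus class), giving the weak-$*$ convergence $m_{v_i}\to\mu_{\Y_p}$. By weak-$*$ continuity of the pushforward along the maps of diagram \eqref{eq:main diagram}, the measures $\mu_{v_i}$ converge to the analogous pushforward of $\mu_{\Y_p}(\U)^{-1}\mu_{\Y_p}|_{\U}$. To identify this limit with $\mu_{\text{polar}}$, one uses that $\mu_{\Y_p}|_{\U}$ pushes through $q_{\infty}$ to a multiple of the Haar probability measure on $\G(\R)/\G(\Z)=\G_1(\R)/\G_1(\Z)\times\G_2(\R)/\G_2(\Z)$; tracing along the bottom of \eqref{eq:main diagram} and using $\G_2(\Z)\leq Q$ (so that $\mu_{\G_2(\R)/\G_2(\Z)}$ pushes to $\mu_{X_{d-1}}$ on $P/Q$) together with the finiteness of $\G_1(\Z)$ (so that $\mu_{\G_1(\R)/\G_1(\Z)}$ unfolds to a multiple of $\mu_{\text{SO}_d(\R)}$) yields the pushforward through $q_{\Delta K^{\pm}}$ of $\mu_{\text{SO}_d(\R)}\otimes\mu_{X_{d-1}}$, which by \eqref{eq:nu_polar definition} is $\nu_{\text{polar}}$. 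Lemma \ref{lem:mult push nu polar to mu polar} then gives $\mult_{*}\nu_{\text{polar}}=\mu_{\text{polar}}$, as desired.

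The main obstacle is the multiplicity bookkeeping in the first step: verifying that the pushed-forward measure is \emph{exactly} the uniform counting measure on $\mathcal{L}(v_i)$ (not merely supported there) requires careful tracking of how mass from the different $h\in M_0$ and $\gamma\in\G_1(\Z)$ combines under $\tilde{q}_{\Delta K}$ and $\tilde{\mult}$, with the invariance of $m_{v_i}$ and the distinctness \eqref{eq:K cosets empty intersection} as the essential tools. A secondary technical concern is ruling out escape of mass when pushing $\mu_{\Y_p}|_{\U}$ to $X_{d-1,d}$ through the non-compact $\G_2$-direction; this is controlled by the non-divergence estimates underlying the AES theorem together with the fact that $\U$ is a clopen subset of $\Y_p$ of fixed positive measure.
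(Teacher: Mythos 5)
Your overall architecture matches the paper's: lift to the S-arithmetic orbit $O_{v_i,p}$, apply the Aka--Einsiedler--Shapira equidistribution theorem, push back down through diagram \eqref{eq:main diagram}, and identify the limit with $\mu_{\text{polar}}$ via $\nu_{\text{polar}}$ and Lemma \ref{lem:mult push nu polar to mu polar}. The identification of the limiting measure in your last paragraph is correct and is exactly the content of Corollary \ref{cor:s-arithemetic measures converge to haar} together with Lemma \ref{lem:push of nu_tilde_v_polar}.

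There is, however, a genuine gap in your first step. You assert that ``each $O_{v_i,p,h}$ carries equal mass'' under the restricted orbit measure and conclude that the pushforward is \emph{exactly} the uniform counting measure $\mu_{v_i}$. This is false in general. Each $O_{v,p,h}$ is an orbit of the group $\Delta K\times L_{v}(\Z_{p})$, and its mass under $\eta_{v}=\mu_{O_{v,p}}\mid_{\U}$ is inversely proportional to the order of the stabilizer of the base point:
\[
\eta_{v}(O_{v,p,h})=\frac{\alpha}{\left|\text{stab}_{\Delta K\times L_{v}(\Z_{p})}(k_{v},h,a_{v}k_{v}g_{v},g_{v}^{-1}hg_{v})\G\left(\Z\left[\tfrac{1}{p}\right]\right)\right|},
\]
and these stabilizer orders can vary with $h$. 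The same issue reappears when passing between $\tilde{R}_{v}$ and $R_{v}$ by quotienting by the finite group $\G_{1}(\Z)$, whose fibers can have different cardinalities. Consequently the pushforward of the orbit measure is only \emph{asymptotically} the uniform counting measure, not identically equal to it. The paper resolves this with Lemma \ref{lem:difference of measure is zero}, which rests on Lemma \ref{lem:weight lemma from aes-1} (Lemmata 6.3 and 6.4 of \cite{AESgrids}): the points with nontrivial stabilizer form an asymptotically negligible proportion of $R_{v}$, so the two measures have the same weak-$*$ limit. You correctly flag the ``multiplicity bookkeeping'' as the main obstacle, but the resolution is not that the bookkeeping works out exactly --- it is that the discrepancy vanishes in the limit, and establishing that requires the additional counting input from \cite{AESgrids}, which your argument omits.
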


Theorem \ref{thm:refinement of main theorem} implies Theorem \ref{thm:maintheorem}
by the following. In \cite{AESgrids} (see Section 5.1 and Proposition
6.2 in \cite{AESgrids}) there was introduced an equivalence relation
on the primitive vectors lying on spheres. It was shown that the equivalence
class of $v\in\Z_{\text{prim}}^{d}$ is exactly 
\[
\left\{ \gamma^{-1}\gamma_{1}(h)v\right\} _{\gamma\in\G_{1}(\Z),\ h\in M_{0}}.
\]
Hence by Lemma \ref{eq:decomposition of h}, if $v\sim u$ then $\mathcal{L}(v)=\mathcal{L}(u)$. 

\section{\label{sec:proof of the equidisitrbution}The resulting measures}

\subsection{A further refinement}

We define $\tilde{\nu}_{v}$ be the uniform measure on the finite
set $\tilde{R}_{v}$ (defined in \eqref{eq:R^tilde_v}). We also define
the measure 
\begin{equation}
\tilde{\nu}_{\text{polar}}=\left(\tilde{q}_{\Delta K}\right)_{*}\mu_{\G_{1}(\R)}\otimes\mu_{\G_{2}(\R)\diagup\G_{2}(\Z)},\label{eq:nu_tilde_polar}
\end{equation}
 where $\mu_{\G_{1}(\R)}$ and $\mu_{\G_{2}(\R)\diagup\G_{2}(\Z)}$
are the Haar probability measure on $\G_{1}(\R)$ and the $\G_{2}(\R)$-invariant
probability measure on $\G_{2}(\R)\diagup\G_{2}(\Z)$. We will prove
the following.
\begin{thm}
\label{thm:convergence of nu tilde-1}Let $\left\{ v_{i}\right\} _{i=1}^{\infty}$
be a sequence of admissible vectors such that 
\[
\left\Vert v_{i}\right\Vert \to\infty,
\]
 then\emph{
\begin{equation}
\tilde{\nu}_{v_{i}}\overset{\text{weak *}}{\longrightarrow}\tilde{\nu}_{\text{polar}}.\label{eq:nu_tilde convergence to natural measure}
\end{equation}
}
\end{thm}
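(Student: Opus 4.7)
The plan is to deduce Theorem \ref{thm:convergence of nu tilde-1} from the equidistribution of the S-arithmetic orbits $O_{v,p}$ (the content of the theorem referred to as Theorem \ref{thm:AESgrids thm} in \cite{AESgrids}), and to then push this equidistribution through the right half of the commuting diagram \eqref{eq:main diagram} in order to land in $\Delta K\diagdown\G(\R)\diagup\G_{2}(\Z)$.

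First, I would introduce the natural $H_v(\R\times\Q_p)$-invariant probability measure $\nu_{v,p}$ on the compact orbit $O_{v,p}\subseteq\Y_p$ and appeal to the theorem of Aka--Einsiedler--Shapira: under the admissibility assumptions, as $\|v_i\|\to\infty$ the measures $\nu_{v_i,p}$ converge weak-$*$ to the $\G(\R\times\Q_p)$-invariant probability measure on the homogeneous space on which they live (for $d=4,5$ the congruence conditions on $\|v_i\|^2$ modulo $p$ and $8$ are exactly what ensure that the orbits are not stuck inside a proper closed intermediate subset, i.e.\ that the stabilizer $H_{v_i}(\Q_p)$ acts with dense orbits at the p-adic place). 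Restricting to the clopen set $\U$ and normalizing, I get that the restricted measures converge to $\mu_\U$, the normalized restriction of Haar measure.

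Next, I push forward via $q_\infty$ from $\U$ to $\G(\R)/\G(\Z)$. Because $q_\infty$ is continuous, proper on $\U$, and by the standard computation (see e.g.\ the discussion surrounding \eqref{eq:q_infty def}) pushes $\mu_\U$ to a positive multiple of the Haar measure $\mu_{\G(\R)/\G(\Z)}$, I conclude that $(q_\infty)_*(\nu_{v_i,p}\mid_{\U})$ converges to $\mu_{\G(\R)/\G(\Z)}$. Now lift via $\pi_{\G_1(\Z)}^{-1}$: since $\G_1(\Z)=\mathrm{SO}_d(\Z)$ is finite, this map has uniformly bounded fibers and the pull-back measures are well-defined uniform averages; by Proposition \ref{prop:properties of p-adic projection} the preimage of $q_\infty(\U\cap O_{v,p})$ is exactly the finite collection of points whose image under $\tilde{q}_{\Delta K}$ is $\tilde{R}_v$, so the uniform average on this preimage is precisely $(\tilde{q}_{\Delta K})^*\tilde{\nu}_v$ in the natural sense. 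Pushing down by $\tilde{q}_{\Delta K}$ and using the identification $\G(\R)/\G_2(\Z)\cong \G_1(\R)\times \bigl(\G_2(\R)/\G_2(\Z)\bigr)$ (up to the finite $\G_1(\Z)$ quotient), the limit measure becomes
\[
\bigl(\tilde{q}_{\Delta K}\bigr)_{*}\bigl(\mu_{\G_1(\R)}\otimes\mu_{\G_2(\R)/\G_2(\Z)}\bigr)=\tilde{\nu}_{\text{polar}},
\]
by definition \eqref{eq:nu_tilde_polar}.

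The main obstacle is Step~1: verifying that the admissibility assumptions on $\|v_i\|^2$ are sufficient to trigger the AES equidistribution theorem, which requires ruling out concentration on proper homogeneous subvarieties at the p-adic place. In dimensions $d\geq 6$ this is automatic because $H_v$ is already simply connected with a single genus, whereas in dimensions $4$ and $5$ the spinor genus/ambiguity at $p=2$ is precisely what is ruled out by the congruence conditions. A secondary technical point is checking that all pushforwards and pullbacks in the bottom-right square of diagram \eqref{eq:main diagram} commute with the limit, which reduces to continuity and properness of the maps involved together with the fact that the finite groups $\G_1(\Z)$ and $\Delta K\cap\Delta K^{\pm}$ act by uniformly bounded-to-one maps — so weak-$*$ convergence is preserved under each arrow. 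Together these steps give \eqref{eq:nu_tilde convergence to natural measure}.
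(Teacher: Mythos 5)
Your overall route is the same as the paper's: invoke the Aka--Einsiedler--Shapira equidistribution theorem for the orbits $O_{v_i,p}$, restrict to the clopen set $\U$, push forward by the proper map $q_{\infty}$ to get convergence to $\mu_{\G(\R)/\G(\Z)}$, and then transport the statement through Diagram \eqref{eq:main diagram} down to $\Delta K\diagdown\G(\R)\diagup\G_{2}(\Z)$. Up to and including Corollary \ref{cor:s-arithemetic measures converge to haar} your argument matches the paper.

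However, there is a genuine gap in the step where you claim that the lift of $(q_{\infty})_{*}\eta_{v}$ along $\pi_{\G_{1}(\Z)}$ is ``precisely'' the uniform average on the preimage, i.e.\ that it agrees with $\tilde{\nu}_{v}$ (the uniform counting measure on $\tilde{R}_{v}$). This is false in general. The measure $(q_{\Delta K}\circ q_{\infty})_{*}\eta_{v}$ is atomic on the finite set $R_{v}$, but its weight at the atom $\tilde{O}_{v,h}$ is inversely proportional to the order of the stabilizer of the corresponding point in $\Delta K\times L_{v}(\Z_{p})$, while $(\pi_{\G_{1}(\Z)})_{*}\tilde{\nu}_{v}$ has weights inversely proportional to $\left|\text{stab}_{\G_{1}(\Z)}(Kk_{v}\gamma_{1}^{-1}(h))\right|$. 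These two weightings coincide only at atoms with trivial stabilizers, so the two measures are not equal for a fixed $v$. The paper isolates exactly this issue as Lemma \ref{lem:difference of measure is zero}: one must show that the \emph{difference} of the two measures tends to zero, which is done by proving (via Lemmata 6.3 and 6.4 of \cite{AESgrids}) that both measures assign a common maximal weight to all atoms outside the exceptional set $E$ of points with non-trivial stabilizer, and that $\left|R_{v}\cap E\right|/\left|R_{v}\right|\to0$. Without this asymptotic comparison your argument establishes convergence of $(q_{\Delta K}\circ q_{\infty})_{*}\eta_{v_{i}}$ but does not transfer it to the counting measures $\tilde{\nu}_{v_{i}}$, which is the actual content of the theorem. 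The final descent from $(\pi_{\G_{1}(\Z)})_{*}\tilde{\nu}_{v_{i}}$ back to $\tilde{\nu}_{v_{i}}$ (using finiteness of $\G_{1}(\Z)$ and $\G_{1}(\Z)$-invariance of both measures) is fine and agrees with the paper.
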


Theorem \ref{thm:convergence of nu tilde-1} implies Theorem \ref{thm:refinement of main theorem}
by the following two lemmata.
\begin{lem}
\label{lem:push of counting measure nu_tilde_v}It holds that the
map $\tilde{\mult}$ when restricted to $\tilde{R}_{v}$ is a bijection
onto $\mathcal{L}(v)$. In particular, 
\begin{equation}
\tilde{\mult}_{*}\tilde{\nu}_{v}=\mu_{v}.\label{eq:mult_tild_nu_tild_v is mu_v}
\end{equation}
\end{lem}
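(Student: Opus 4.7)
The plan is to establish the lemma in two steps. First, I would show that $\tilde{\mult}$ restricted to $\tilde{R}_{v}$ is a bijection onto $\mathcal{L}(v)$; second, I would deduce \eqref{eq:mult_tild_nu_tild_v is mu_v} formally. The second step is automatic: a bijection between finite sets pushes the uniform probability measure on the source to the uniform probability measure on the target, which is exactly the content of $\tilde{\mult}_{*}\tilde{\nu}_{v}=\mu_{v}$. So the real work is the bijection, and since surjectivity is built into the definition $\mathcal{L}(v):=\tilde{\mult}(\tilde{R}_{v})$, everything reduces to injectivity.

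Suppose $\tilde{\mult}(\tilde{O}_{v,h,\gamma})=\tilde{\mult}(\tilde{O}_{v,h',\gamma'})$. By Lemma \ref{lem:elements of L_v} this reads $[\Lambda_{\gamma^{-1}\gamma_{1}(h)v}]=[\Lambda_{(\gamma')^{-1}\gamma_{1}(h')v}]$, with both underlying vectors sitting on the sphere $\S^{d-1}(\|v\|)$, so the bijection of Lemma \ref{lem:bijection orthogonal vectors} forces
\[
\gamma^{-1}\gamma_{1}(h)v\;=\;(\gamma')^{-1}\gamma_{1}(h')v.
\]
Setting $\beta:=\gamma_{1}(h')^{-1}\gamma'\gamma^{-1}\gamma_{1}(h)$, this says $\beta v=v$, i.e.\ $\beta\in H_{v}(\R)$, and I would take $k:=k_{v}\beta k_{v}^{-1}$ as the candidate witness for the desired equality in $\Delta K\diagdown\G(\R)\diagup\G_{2}(\Z)$. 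Since $k_{v}v=e_{d}$ and $\beta v=v$, one has $k\cdot e_{d}=e_{d}$, hence $k\in K=H_{e_{d}}(\R)$, and the $\G_{1}$-coordinate identity $k\cdot k_{v}\gamma_{1}^{-1}(h)\gamma=k_{v}\gamma_{1}^{-1}(h')\gamma'$ is an immediate unwinding of the definition of $\beta$.

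The main obstacle is the $\G_{2}$-coordinate, where one must produce $\gamma_{0}\in\G_{2}(\Z)$ satisfying $k\cdot a_{v}k_{v}g_{v}\gamma_{2}(h)=a_{v}k_{v}g_{v}\gamma_{2}(h')\gamma_{0}$. Since $a_{v}$ preserves the $e_{d}$-axis it commutes with $K$, and the candidate is therefore forced to be
\[
\gamma_{0}\;=\;\gamma_{2}(h')^{-1}\,(g_{v}^{-1}\beta g_{v})\,\gamma_{2}(h),
\]
which lies in $\G_{2}(\R)$ because $g_{v}^{-1}H_{v}g_{v}\leq\G_{2}$. For integrality I would substitute the definition of $\beta$ and invoke Lemma \ref{lem:genrated vectors and sl_d(Z) matrices} at both $h$ and $h'$ to rewrite
\[
\gamma_{0}\;=\;[\gamma_{1}(h')g_{v}\gamma_{2}(h')]^{-1}\,\gamma'\gamma^{-1}\,[\gamma_{1}(h)g_{v}\gamma_{2}(h)],
\]
a product of matrices in $\text{SL}_{d}(\Z)$. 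Hence $\gamma_{0}\in\text{SL}_{d}(\Z)\cap\G_{2}(\R)=\G_{2}(\Z)$, closing injectivity and finishing the lemma.
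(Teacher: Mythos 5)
Your argument is correct, and it takes a genuinely different route from the paper's. The paper proves injectivity by invoking part 2 of Proposition \ref{prop:properties of p-adic projection}: for $h\neq h'$ the cosets $Kk_{v}\gamma_{1}^{-1}(h)\G_{1}(\Z)$ and $Kk_{v}\gamma_{1}^{-1}(h')\G_{1}(\Z)$ are disjoint, hence the corresponding subgroups lie in different hyperplanes; the remaining case of a single $h$ with $\gamma\G_{1}(\Z)\neq\gamma'\G_{1}(\Z)$ is left implicit there. You instead start from an assumed equality of images and push it back up to the double coset space: Lemma \ref{lem:bijection orthogonal vectors} converts the equality of homothety classes into an equality of the underlying vectors, the stabilizing element $\beta\in H_{v}(\R)$ is conjugated by $k_{v}$ into $K$ (correctly using that $a_{v}$ commutes with $K$), and Lemma \ref{lem:genrated vectors and sl_d(Z) matrices} delivers the integrality of the $\G_{2}$-witness $\gamma_{0}\in\text{SL}_{d}(\Z)\cap\G_{2}(\R)=\G_{2}(\Z)$. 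All of your intermediate computations check out. What your route buys is a uniform and fully explicit treatment of every pair $(h,\gamma)$, $(h',\gamma')$, including the within-$h$ case that the paper's proof does not spell out; what the paper's route buys is brevity, since the separating statement was already proved. The one pressure point, common to both arguments, is the reliance on the literal bijectivity of $\Lambda_{*}$ in Lemma \ref{lem:bijection orthogonal vectors}, which silently absorbs the ambiguity $\Lambda_{v}=\Lambda_{-v}$; since you use that lemma exactly as the paper states it, this is a feature inherited from the paper rather than a gap in your proof.
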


\begin{proof}
The map is clearly onto. In order to prove injectivity, we recall
that part 2 of Proposition \ref{prop:properties of p-adic projection}
states that
\[
Kk_{v}\gamma_{1}^{-1}(h)\G_{1}(\Z)\cap Kk_{v}\gamma_{1}^{-1}(h')\G_{1}(\Z)=\emptyset,\ \ h\neq h',\ h,h'\in M_{0},
\]
which implies that for different representatives $h,h'\in M_{0}$,
the corresponding $\left(d-1\right)$-subgroups defined by \eqref{eq:the resulting primitive lattice as orthogonal latice}
lie inside different hyperplanes. Finally, since bijectivity is established,
we immediately get \eqref{eq:mult_tild_nu_tild_v is mu_v}.
\end{proof}
\begin{lem}
\label{lem:push of nu_tilde_v_polar}It holds that $\tilde{\mult}_{*}\tilde{\nu}_{\text{polar}}=\mu_{\text{polar }}$.
\end{lem}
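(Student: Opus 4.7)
The plan is to chase the commuting diagram \eqref{eq:main diagram} and reduce to Lemma \ref{lem:mult push nu polar to mu polar}. By commutativity of the bottom-right triangle, $\tilde{\mult} = \mult \circ \tilde{q}$, so $\tilde{\mult}_{*}\tilde{\nu}_{\text{polar}} = \mult_{*}\tilde{q}_{*}\tilde{\nu}_{\text{polar}}$; in view of Lemma \ref{lem:mult push nu polar to mu polar} it therefore suffices to show $\tilde{q}_{*}\tilde{\nu}_{\text{polar}} = \nu_{\text{polar}}$. Using commutativity of the middle square, $\tilde{q}\circ\tilde{q}_{\Delta K} = q_{\Delta K^{\pm}}\circ(\text{id}\times q_{P/Q}^{\G_{2}(\R)/\G_{2}(\Z)})$, together with \eqref{eq:nu_tilde_polar}, I obtain
\[
\tilde{q}_{*}\tilde{\nu}_{\text{polar}} = (q_{\Delta K^{\pm}})_{*}\!\left(\mu_{\G_{1}(\R)}\otimes (q_{P/Q}^{\G_{2}(\R)/\G_{2}(\Z)})_{*}\mu_{\G_{2}(\R)/\G_{2}(\Z)}\right).
\]
Since $\G_{1}(\R)=\text{SO}_{d}(\R)$, comparing with \eqref{eq:nu_polar definition} reduces the whole statement to the single claim
\[
(q_{P/Q}^{\G_{2}(\R)/\G_{2}(\Z)})_{*}\mu_{\G_{2}(\R)/\G_{2}(\Z)} = \mu_{X_{d-1}} \text{ on } P/Q.
\]

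To verify this, write a generic $g\in\G_{2}(\R)$ as $g=\bigl(\begin{smallmatrix}s & v\\ 0 & 1\end{smallmatrix}\bigr)$ with $s\in\text{SL}_{d-1}(\R)$ and $v\in\R^{d-1}$. Right multiplication by $\bigl(\begin{smallmatrix}I & -s^{-1}v\\ 0 & 1\end{smallmatrix}\bigr)\in Q$ absorbs the translation, so $gQ=\bigl(\begin{smallmatrix}s & 0\\ 0 & 1\end{smallmatrix}\bigr)Q = \ef_{id}(\R^{\times}s\cdot\text{PGL}_{d-1}(\Z))$. Hence $q_{P/Q}^{\G_{2}(\R)/\G_{2}(\Z)}$ factors through the natural projection $\G_{2}(\R)/\G_{2}(\Z)\twoheadrightarrow \text{SL}_{d-1}(\R)/\text{SL}_{d-1}(\Z)$ that forgets the translation component (well-defined because $\G_{2}(\Z)=\text{SL}_{d-1}(\Z)\ltimes\Z^{d-1}$). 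Fubini applied to this torus fibration shows that the Haar probability on $\G_{2}(\R)/\G_{2}(\Z)$ pushes to the Haar probability on $\text{SL}_{d-1}(\R)/\text{SL}_{d-1}(\Z)$, which in turn corresponds via $\ef_{id}$ to the $\text{PGL}_{d-1}(\R)$-invariant probability measure $\mu_{X_{d-1}}$.

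The most delicate step is this last identification, where care is needed with the $\R^{\times}\cdot\text{GL}_{d-1}(\Z)$ versus $\text{SL}_{d-1}(\Z)$ conventions (equivalently, the two connected components of $\text{PGL}_{d-1}(\R)$). A clean way to sidestep the issue is to observe that $(q_{P/Q}^{\G_{2}(\R)/\G_{2}(\Z)})_{*}\mu_{\G_{2}(\R)/\G_{2}(\Z)}$ is a probability measure on $P/Q$ that is left-invariant under the Levi subgroup $\bigl\{\bigl(\begin{smallmatrix}m & 0\\ 0 & 1/\det m\end{smallmatrix}\bigr):m\in\text{GL}_{d-1}(\R)\bigr\}$ of $P$, and then invoke uniqueness of the $\text{PGL}_{d-1}(\R)$-invariant probability measure on $X_{d-1}$ transported to $P/Q$ via $\ef_{id}$.
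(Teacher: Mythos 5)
Your argument is correct and takes essentially the same route as the paper's proof: reduce via $\tilde{\mult}=\mult\circ\tilde{q}$ and Lemma \ref{lem:mult push nu polar to mu polar} to the identity $\tilde{q}_{*}\tilde{\nu}_{\text{polar}}=\nu_{\text{polar}}$, and then use the commutation relation \eqref{eq:q_tilde equalities} together with $\left(q_{P\diagup Q}^{\G_{2}(\R)\diagup\G_{2}(\Z)}\right)_{*}\mu_{\G_{2}(\R)\diagup\G_{2}(\Z)}=\mu_{X_{d-1}}$. The only difference is that you supply a proof of this last identity (torus fibration plus uniqueness of the invariant measure), which the paper merely asserts; this is a sound addition, with the small caveat that the pushforward is directly seen to be invariant only under the $\text{SL}_{d-1}(\R)$ part of the Levi rather than all of it, which still suffices because that subgroup already acts transitively on $X_{d-1}$.
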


\begin{proof}
Since $\tilde{\mult}=\mult\circ\tilde{q}$ and since $\mult_{*}\nu_{\text{polar}}=\mu_{\text{polar }}$,
it is left to prove $\tilde{q}_{*}\tilde{\nu}_{\text{polar}}=\nu_{\text{polar}}.$
We note that 
\[
\left(q_{P\diagup Q}^{\G_{2}(\R)\diagup\G_{2}(\Z)}\right)_{*}\mu_{\G_{2}(\R)\diagup\G_{2}(\Z)}=\mu_{X_{d-1}},
\]
and observe by Diagram \eqref{eq:main diagram} that
\begin{equation}
\tilde{q}\circ\tilde{q}_{\Delta K}=q_{\Delta K^{\pm}}\circ\left(id\times q_{P\diagup Q}^{\G_{2}(\R)\diagup\G_{2}(\Z)}\right).\label{eq:q_tilde equalities}
\end{equation}
Therefore,
\[
\tilde{q}_{*}\tilde{\nu}_{\text{polar}}\underbrace{=}_{\text{definition of \ensuremath{\tilde{\nu}_{\text{polar}}}}}\left(\tilde{q}\right)_{*}\left(\left(\tilde{q}_{\Delta K}\right)_{*}\mu_{\G_{1}(\R)}\otimes\mu_{\G_{2}(\R)\diagup\G_{2}(\Z)}\right)\underbrace{=}_{\eqref{eq:q_tilde equalities}}
\]
\[
\left(q_{\Delta K^{\pm}}\right)_{*}\left(id\times q_{P\diagup Q}^{\G_{2}(\R)\diagup\G_{2}(\Z)}\right)_{*}\mu_{\G_{1}(\R)}\otimes\mu_{\G_{2}(\R)\diagup\G_{2}(\Z)}=
\]
\[
\left(q_{\Delta K^{\pm}}\right)_{*}\mu_{\G_{1}(\R)}\otimes\mu_{X_{d-1}}\underbrace{=}_{\text{\eqref{eq:nu_polar definition}}}\nu_{\text{polar }}.
\]
\end{proof}

\subsection{Proof of Theorem \ref{thm:convergence of nu tilde-1}}

To summarize, we have 
\[
\text{Theorem \ref{thm:convergence of nu tilde-1} \ensuremath{\implies}Theorem \ref{thm:refinement of main theorem} \ensuremath{\implies}Theorem \ref{thm:maintheorem}}.
\]
Hence this section serves as the last step of the proof for Theorem
$\ref{thm:maintheorem}.$

\subsubsection{The key Theorem of \cite{AESgrids}}

The orbit $O_{v,p}$ defined in \eqref{eq:the s-arithemtic orbit}
is a compact orbit (see \cite{AESgrids}, Section 3.2) and we denote
by $\mu_{O_{v,p}}$ the $L_{v}(\R\times\Q_{p})$-invariant probability
measure supported on $O_{v,p}$. Also, let $\mu_{\Y_{p}}$ be the
$\G(\R\times\Q_{p})$-invariant probability measure on $\Y_{p}$.
The following theorem, which was proved in \cite{AESgrids}, is key
in order to prove Theorem \ref{thm:convergence of nu tilde-1}.
\begin{thm}
\emph{\label{thm:AESgrids thm} }Let $\left\{ v_{i}\right\} _{i=1}^{\infty}$
be a sequence of admissible vectors such that 
\[
\left\Vert v_{i}\right\Vert \to\infty,
\]
then 
\[
\mu_{O_{v_{i},p}}\overset{\text{weak * }}{\longrightarrow}\mu_{\Y_{p}}.
\]
\end{thm}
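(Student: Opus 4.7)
The plan is to establish equidistribution of the compact $L_{v_i}$-orbits $O_{v_i,p}$ in the $S$-arithmetic homogeneous space $\Y_p$ by combining a measure classification theorem of Mozes--Shah / Einsiedler--Margulis--Venkatesh type with an entropy (or non-compactness at $p$) argument controlled by the admissibility hypotheses. The acting group $L_{v_i}(\R\times\Q_p)$ is a diagonal copy of $H_{v_i}(\R\times\Q_p)\cong\mathrm{SO}_{d-1}(\R\times\Q_p)$ (with the quadratic form depending on $v_i$), which is semisimple and, at the archimedean place, compact; the engine for equidistribution must therefore come from the $p$-adic direction.

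First I would verify non-escape of mass: since each $O_{v_i,p}$ carries an $L_{v_i}$-invariant probability measure and $L_{v_i}$ is semisimple, an application of the $S$-arithmetic Dani--Margulis / Kleinbock--Tomanov non-divergence machinery (together with the fact that the natural height on $\Y_p$ is bounded below along such semisimple compact orbits) shows that any weak-$*$ subsequential limit $\mu_\infty$ is again a probability measure. Next I would apply the $S$-arithmetic Mozes--Shah theorem: $\mu_\infty$ is the algebraic measure on a closed orbit $H x_\infty$ of a closed subgroup $H\leq\G(\R\times\Q_p)$ generated by the limiting unipotents, and $H$ contains a conjugate (by a bounded element) of a diagonally embedded $\mathrm{SO}_{d-1}(\R\times\Q_p)$. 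Combined with a volume computation showing $\mathrm{vol}(O_{v_i,p})\to\infty$ (so $\mu_\infty$ cannot be supported on a single $L$-orbit), this forces $H$ to be strictly larger than the diagonal $L_{v_i}$.

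The crux is Step 3: classifying the intermediate $\Q$-subgroups of $\G=\G_1\times\G_2$ that can contain (a conjugate of) the diagonal $\mathrm{SO}_{d-1}$. Using that $\mathrm{SO}_{d-1}$ acts irreducibly on the standard representation of each factor (for $d\geq 4$) and that the two projections $H\to\G_i$ must be surjective, a short algebraic analysis based on Goursat's lemma (the diagonal is the graph of an isomorphism $\mathrm{SO}_{d-1}\to\mathrm{SO}_{d-1}$, and any strictly larger subgroup must either project surjectively one side and contain a full unipotent the other, or decouple the factors) reduces the possible $H$ to $L_{v_i}$ itself or $\G(\R\times\Q_p)$. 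Having eliminated the former by the volume argument, we get $H=\G$ and hence $\mu_\infty=\mu_{\Y_p}$.

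The main obstacle, and where the admissibility hypotheses are crucial, is ensuring that the $p$-adic group $H_{v_i}(\Q_p)$ is genuinely non-compact (isotropic) so that an honest unipotent element survives in the limit and Mozes--Shah can be triggered: for $d>5$ the form of $H_{v_i}$ has at least $5$ variables and is automatically isotropic at every finite place by Hasse--Minkowski, while for $d=5$ the congruence $\|v_i\|^2\in\mathbb{D}(p)$ pins down the discriminant at $p$ so that the $4$-variable form is $p$-adically isotropic, and for $d=4$ the stronger condition $\|v_i\|^2\in\mathbb{D}(p)/8\N$ ensures isotropy of the remaining ternary form at $p$ (this is the most delicate case, exactly where Linnik-type obstructions appear). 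Once isotropy at $p$ is in hand, the unipotent flow inside $H_{v_i}(\Q_p)$ produces positive entropy (or equivalently, the recurrence needed for Ratner's theorem) uniformly in $i$, which is what powers Steps 2--3 above and completes the proof.
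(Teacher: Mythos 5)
This theorem is not proved in the present paper at all: it is imported verbatim from \cite{AESgrids} (``The following theorem, which was proved in \cite{AESgrids}\dots''), so there is no internal argument to compare yours against, only the citation. Your outline does capture the overall strategy of that reference: the real factor of $L_{v}$ is compact, so the dynamics must come from the place $p$; the admissibility conditions are exactly what guarantee that the quadratic form in $d-1$ variables on $v^{\perp}$ is isotropic over $\Q_{p}$ (automatic once $d-1\geq5$, conditional for $d-1=3,4$), hence that $H_{v}(\Q_{p})$ contains nontrivial unipotents; and one then runs $S$-arithmetic unipotent measure rigidity together with non-divergence to classify the weak-$*$ limits.

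There are, however, two concrete gaps in your sketch. First, the Goursat-type dichotomy ``$H$ is either $L_{v}$ or all of $\G$'' is false as a group-theoretic statement: there are proper intermediate subgroups containing the diagonal, for instance $H_{v}\times g_{v}^{-1}H_{v}g_{v}$, or $\text{SO}_{d}\times g_{v}^{-1}H_{v}g_{v}$, or the diagonal enlarged by the unipotent radical of $\text{ASL}_{d-1}$ in the second factor. The actual argument must be phrased as a classification of joinings: one first establishes that each marginal (the projection to $\G_{1}(\R\times\Q_{p})/\G_{1}(\Z[\frac{1}{p}])$ and to $\G_{2}(\R\times\Q_{p})/\G_{2}(\Z[\frac{1}{p}])$) equidistributes --- these are themselves nontrivial theorems (integer points on spheres, and shapes of orthogonal lattices, respectively) --- and then shows that the only $\Delta\text{SO}_{d-1}$-invariant homogeneous joining of the two Haar measures is the product/full Haar measure, which is where the maximality and irreducibility of $\text{SO}_{d-1}$ actually enter. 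Second, your assertion that ``the two projections $H\to\G_{i}$ must be surjective'' is precisely this marginal equidistribution and cannot be taken for granted; it is an input, not a consequence of Mozes--Shah. If your intent is to reprove the theorem rather than cite \cite{AESgrids}, these two steps, together with the non-divergence and the volume lower bound you invoke, each need to be carried out in detail.
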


We define the probability measure $\eta_{v}$ on $O_{v,p}\cap\U$,
by
\begin{equation}
\eta_{v}\overset{\text{def}}{=}\mu_{O_{v,p}}\mid_{\U}.\label{eq:eta_v definition}
\end{equation}
 Since $\U$ is a clopen set, it follows from Theorem \ref{thm:AESgrids thm}
that
\[
\eta_{v_{i}}\overset{\text{weak *}}{\longrightarrow}\mu_{\Y_{p}}\mid_{\U}.
\]
Also, since $q_{\infty}$ is a proper map we get 
\[
\left(q_{\infty}\right)_{*}\eta_{v_{i}}\overset{\text{weak *}}{\longrightarrow}\left(q_{\infty}\right)_{*}\mu_{\Y_{p}}\mid_{\U}.
\]
Importantly, $\mu_{\Y_{p}}\mid_{\U}$ is $\G(\R)$ invariant, hence
also $\left(q_{\infty}\right)_{*}\mu_{\Y_{p}}\mid_{\U}$. Therefore
we deduce,
\begin{cor}
\label{cor:s-arithemetic measures converge to haar}It holds that
\begin{equation}
\left(q_{\infty}\right)_{*}\eta_{v_{i}}\overset{\text{weak *}}{\longrightarrow}\mu_{\G(\R)/\G(\Z)},\label{eq:weak convergence to haar of eta}
\end{equation}
where $\mu_{\G(\R)/\G(\Z)}$ is the $\G(\R)$-invariant probability
on $\G(\R)/\G(\Z)$.
\end{cor}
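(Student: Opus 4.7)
The plan is to deduce the corollary from the equidistribution Theorem \ref{thm:AESgrids thm} by restricting to the clopen set $\U$, pushing forward via the proper map $q_\infty$, and recognizing the limit through $\G(\R)$-invariance together with uniqueness of the Haar probability measure on $\G(\R)/\G(\Z)$.

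First, I would observe that $\U = \G(\R\times\Z_p)\G(\Z[1/p])$ is clopen in $\Y_p$: it is the image of the open subgroup $\G(\R\times\Z_p)$, and its complement is a finite union of further double cosets (by finiteness of the class number) and so is also open. The portmanteau theorem for clopen sets therefore upgrades the weak-$*$ convergence $\mu_{O_{v_i,p}}\to\mu_{\Y_p}$ to the convergence $\eta_{v_i}\to\mu_{\Y_p}|_\U$ of restrictions (with the implicit normalization so that both sides are probabilities on $\U$). Next, $q_\infty\colon\U\to\G(\R)/\G(\Z)$ is proper, since its fibers may be identified with $\G(\Z_p)/(\G(\Z_p)\cap\G(\Z[1/p]))=\G(\Z_p)/\G(\Z)$, which is compact. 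Properness lets us push forward, yielding
\[
(q_\infty)_*\eta_{v_i}\ \longrightarrow\ (q_\infty)_*\mu_{\Y_p}|_\U.
\]

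To identify the limit, I would note that $\mu_{\Y_p}|_\U$ is invariant under left multiplication by $\G(\R)\times\G(\Z_p)$, since this subgroup preserves both $\mu_{\Y_p}$ and the set $\U$. A direct inspection of \eqref{eq:q_infty def} shows that $q_\infty$ is $\G(\R)$-equivariant: left multiplication of a representative by $(g,e_p)$ with $g\in\G(\R)$ leaves the decomposition $g_p=c_p\gamma_p$ untouched and only multiplies the real coordinate $g_\infty\gamma_p^{-1}$ on the left by $g$. Consequently $(q_\infty)_*\mu_{\Y_p}|_\U$ is a $\G(\R)$-invariant probability measure on $\G(\R)/\G(\Z)$. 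Since $\text{SO}_d(\R)$ is compact and $\text{ASL}_{d-1}(\Z)$ is a lattice in $\text{ASL}_{d-1}(\R)$, $\G(\Z)$ is a lattice in $\G(\R)$, so $\mu_{\G(\R)/\G(\Z)}$ is the unique $\G(\R)$-invariant probability on $\G(\R)/\G(\Z)$; this forces the identification and finishes the proof.

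The main item of care, rather than a genuine obstacle, is the normalization bookkeeping around the restriction to $\U$ and the push-forward. This is handled by observing that clopen convergence simultaneously gives $\mu_{O_{v_i,p}}(\U)\to\mu_{\Y_p}(\U)$, so the normalizing constants on the two sides of the limit align.
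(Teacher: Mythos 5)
Your proposal is correct and follows essentially the same route as the paper: restrict to the clopen set $\U$ to get $\eta_{v_i}\to\mu_{\Y_p}\mid_{\U}$, push forward by the proper map $q_\infty$, and identify the limit via $\G(\R)$-invariance and uniqueness of the invariant probability measure on $\G(\R)/\G(\Z)$. The additional details you supply (why $\U$ is clopen, compactness of the fibers of $q_\infty$, the equivariance check, and the normalization bookkeeping) are points the paper leaves implicit, and they are handled correctly.
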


Next, note that Proposition \ref{prop:properties of p-adic projection}
shows that the measure $\left(q_{\infty}\right)_{*}\eta_{v}$ is supported
on a finite union of $\Delta K$ orbits
\[
\bigsqcup_{h\in M_{0}}q_{\infty}\left(O_{v,p,h}\right),
\]
and by applying further $q_{\Delta K}$, we get that $\left(q_{\Delta K}\circ q_{\infty}\right)_{*}\eta_{v}$
is supported on a finite set 
\[
R_{v}\overset{\text{def}}{=}q_{\Delta K}\circ q_{\infty}(O_{v,p}),
\]
which consists of the elements
\[
\tilde{O}_{v,h}=\Delta K\left(k_{v}\gamma_{1}^{-1}(h),a_{v}k_{v}g_{v}\gamma_{2}(h)\right)\G(\Z),\ h\in M_{0}.
\]
On the other hand, note that
\[
\tilde{\pi}_{\G_{1}(\Z)}\left(\tilde{R}_{v}\right)=R_{v},
\]
so that $\left(\tilde{\pi}_{\G_{1}(\Z)}\right)_{*}\tilde{\nu}_{v}$
has the same support as that of $\left(q_{\Delta K}\circ q_{\infty}\right)_{*}\eta_{v}$.
The following lemma connects those two measures.
\begin{lem}
\label{lem:difference of measure is zero}It holds that 
\[
\left(\pi_{\G_{1}(\Z)}\right)_{*}\tilde{\nu}_{v_{i}}-\left(q_{\Delta K}\circ q_{\infty}\right)_{*}\eta_{v_{i}}\overset{\text{weak *}}{\longrightarrow}0.
\]
\end{lem}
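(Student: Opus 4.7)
The plan is to recognize that both measures are supported on the same finite set $R_v=\{\tilde O_{v,h}:h\in M_0\}\subset \Delta K\backslash\G(\R)/\G(\Z)$. Indeed, by Proposition~\ref{prop:properties of p-adic projection}(2) the map $q_{\Delta K}\circ q_\infty$ sends every piece $O_{v,p,h}$ of the decomposition~\eqref{eq:decomposition of O_v,p cap U} onto the single point $\tilde O_{v,h}$, and by construction $\pi_{\G_1(\Z)}(\tilde R_v)=R_v$. Consequently the difference of the two measures is a signed measure concentrated on $R_v$, and the weak-$*$ statement reduces to matching the weights assigned to each atom $\tilde O_{v,h}$, uniformly in $v$.

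For the $\eta_v$-side, since $\eta_v=\mu_{O_{v,p}}\mid_{\U}$ (normalized) the weight at $\tilde O_{v,h}$ is the normalized $\mu_{O_{v,p}}$-mass of $O_{v,p,h}$. Using the realization $O_{v,p}\cong L_v(\R\times\Q_p)/L_v(\Z[1/p])$ and the p-adic double-coset decomposition~\eqref{eq:decomposition of SO_V_perp} of $H_v(\Q_p)$, this mass is proportional to $[L_v(\Z_p):\Gamma_{v,h}]^{-1}$, where
\[
\Gamma_{v,h}=L_v(\Z_p)\cap (h,g_v^{-1}hg_v)\,L_v(\Z[1/p])\,(h,g_v^{-1}hg_v)^{-1}.
\]
For the $\tilde\nu_v$-side, the fiber of $\pi_{\G_1(\Z)}$ over $\tilde O_{v,h}$ inside $\tilde R_v$ is the $\G_1(\Z)$-orbit of $\tilde O_{v,h,1}$ in $\Delta K\backslash\G(\R)/\G_2(\Z)$, whose cardinality equals $[\G_1(\Z):\mathrm{Stab}_h(v)]$; a direct computation using $k_vH_v(\R)k_v^{-1}=K$ and conjugation by $\gamma_1(h)\in H_v(\Z[1/p])$ identifies $\mathrm{Stab}_h(v)$ with $\gamma_1(h)H_v(\Z[1/p])\gamma_1^{-1}(h)\cap\G_1(\Z)$.

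The main obstacle is then the reconciliation of these two stabilizer descriptions, i.e. a local--global (strong-approximation) statement for $H_v$ asserting that the normalized p-adic Haar volume of the $h$-th double coset equals the proportion of the corresponding real $\G_1(\Z)$-orbit inside $\tilde R_v$. Both quantities record the relative size of the same sector of the compact S-arithmetic orbit $O_{v,p}$, viewed through its two projections: via $q_\infty$ on the real side and via $q_p$ on the p-adic side. Once this identification is carried out uniformly in $h\in M_0$, the two probability measures on $R_v$ coincide for every admissible $v$, so the signed measure in the lemma is in fact the zero measure; a fortiori it converges weak-$*$ to $0$. The lemma is phrased only as weak-$*$ convergence because that is the form in which it is used in the derivation of Theorem~\ref{thm:convergence of nu tilde-1}.
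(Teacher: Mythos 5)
Your reduction to comparing atom weights on the common finite support $R_{v}$ is correct and matches the paper's setup, but the way you close the argument contains a genuine gap. You name the ``reconciliation of the two stabilizer descriptions'' as the main obstacle and then simply assert that, once carried out, the two probability measures on $R_{v}$ \emph{coincide exactly} for every admissible $v$. No such identity is proved in the paper, and nothing in your outline supplies it: the weight $\alpha_{v,h}$ of $\left(q_{\Delta K}\circ q_{\infty}\right)_{*}\eta_{v}$ at $\tilde{O}_{v,h}$ is governed by the stabilizer of the relevant point inside $\Delta K\times L_{v}(\Z_{p})$ (a mixture of a real compact factor and a $p$-adic factor), whereas the weight $\beta_{v,h}$ of $\left(\pi_{\G_{1}(\Z)}\right)_{*}\tilde{\nu}_{v}$ is governed by the finite arithmetic stabilizer $\text{stab}_{\G_{1}(\Z)}(Kk_{v}\gamma_{1}^{-1}(h))$. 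These are different objects and there is no a priori reason for their orders to agree atom by atom; the ``local--global statement'' you invoke is precisely the unproven step, not a routine verification. Your closing remark that the lemma is phrased as weak-$*$ convergence only because that is how it is used misreads the situation: it is phrased that way because exact equality is not being claimed.

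The paper's route avoids this obstacle entirely. It introduces the uniform counting measure $\lambda_{v}$ on $R_{v}$ and shows both $\left(\pi_{\G_{1}(\Z)}\right)_{*}\tilde{\nu}_{v}-\lambda_{v}\to0$ and $\left(q_{\Delta K}\circ q_{\infty}\right)_{*}\eta_{v}-\lambda_{v}\to0$. The mechanism is Lemma \ref{lem:weight lemma from aes-1} (Lemmata 6.3 and 6.4 of \cite{AESgrids}): outside the exceptional set $E$ of points with nontrivial $\G_{1}(\Z)$-stabilizer, both families of weights are constant (equal to their respective maxima $M_{v}$ and $N_{v}$), and the proportion $\left|R_{v}\cap E\right|/\left|R_{v}\right|$ tends to zero. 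Hence both measures are asymptotically uniform and their difference tends to zero weak-$*$, without ever matching the weights on $E$. To repair your argument you would either have to actually prove the exact weight identity (which is not available and may fail on $E$), or replace it with this asymptotic-uniformity comparison against $\lambda_{v}$.
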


In Subsection \ref{subsec:lemma of difference of meas} we will explain
how Lemma \ref{lem:difference of measure is zero} follows from \cite{AESgrids}.
Before that, we explain how Lemma \ref{lem:difference of measure is zero}
and the preceding discussion implies Theorem \ref{thm:convergence of nu tilde-1}. 
\begin{proof}[Proof of Theorem \ref{thm:convergence of nu tilde-1}]
 By Corollary \ref{cor:s-arithemetic measures converge to haar},
it follows that 
\[
\left(q_{\Delta K}\circ q_{\infty}\right)_{*}\eta_{v_{i}}=\left(q_{\Delta K}\right)_{*}\left((q_{\infty})_{*}\eta_{v_{i}}\right)\overset{\text{weak *}}{\longrightarrow}\left(q_{\Delta K}\right)_{*}\mu_{\G(\R)\diagup\G(\Z)}.
\]
Hence we get from Lemma \ref{lem:difference of measure is zero} that
\[
\left(\pi_{\G_{1}(\Z)}\right)_{*}\tilde{\nu}_{v_{i}}\overset{\text{weak *}}{\longrightarrow}\left(q_{\Delta K}\right)_{*}\mu_{\G(\R)\diagup\G(\Z)}.
\]
Observe that (see Diagram \eqref{eq:main diagram})
\[
\left(q_{\Delta K}\right)_{*}\mu_{\G(\R)\diagup\G(\Z)}=\left(\pi_{\G_{1}(\Z)}\right)_{*}\left(\left(\tilde{q}_{\Delta K}\right)_{*}\mu_{\G_{1}(\R)}\otimes\mu_{\G_{2}(\R)\diagup\G_{2}(\Z)}\right),
\]
so that by the definition of $\tilde{\nu}_{\text{polar}}$ (see \eqref{eq:nu_tilde_polar}),
we get that
\[
\left(\pi_{\G_{1}(\Z)}\right)_{*}\tilde{\nu}_{v_{i}}\overset{\text{weak *}}{\longrightarrow}\left(\pi_{\G_{1}(\Z)}\right)_{*}\tilde{\nu}_{\text{polar}}.
\]
Now, since the measures $\tilde{\nu}_{v}$ and $\tilde{\nu}_{\text{polar}}$
are both $\G_{1}(\Z)$ invariant and since $\G_{1}(\Z)$ is finite,
we also obtain that 
\[
\tilde{\nu}_{v_{i}}\overset{\text{weak *}}{\longrightarrow}\tilde{\nu}_{\text{polar}}.
\]
\end{proof}

\subsubsection{\label{subsec:lemma of difference of meas}Outline of the proof for
Lemma \ref{lem:difference of measure is zero}}

Let $\lambda_{v}$ be the uniform counting measures on the sets $R_{v}.$
The idea of the proof of Lemma \ref{lem:difference of measure is zero}
is to show that 
\begin{equation}
\left(\pi_{\G_{1}(\Z)}\right)_{*}\tilde{\nu}_{v}-\lambda_{v}\to0,\ \text{and\  }\left(q_{\Delta K}\circ q_{\infty}\right)_{*}\eta_{v}-\lambda_{v}\to0.\label{eq:difference of counting measures}
\end{equation}
We denote 
\begin{equation}
\left(q_{\Delta K}\circ q_{\infty}\right)_{*}\eta_{v}=\sum_{h\in M_{0}}\alpha_{v,h}\delta_{\tilde{O}_{v,h}},\label{eq:rh_Dk_rh_infty_eta}
\end{equation}
then, 
\[
\alpha_{v,h}\overset{\text{def}}{=}\eta_{v}(q_{\infty}^{-1}(O_{v,h}))\underbrace{=}_{\eqref{eq:eta_v definition}\text{, and Proposition \ref{prop:properties of p-adic projection}}}\eta_{v}(O_{v,p,h}).
\]
It follows that 
\[
\eta_{v}(O_{v,p,h})=\frac{\alpha}{\left|\text{stab}_{\Delta K\times L_{v}(\Z_{p})}(k_{v},h,a_{v}k_{v}g_{v},g_{v}^{-1}hg_{v})\G(\Z(\frac{1}{p}))\right|},
\]
where $\alpha=\alpha(v)$ normalizes $\left(q_{\Delta K}\circ q_{\infty}\right)_{*}\eta_{v}$
to be a probability measure. Also, let
\[
\left(\pi_{\G_{1}(\Z)}\right)_{*}\tilde{\nu}_{v}=\sum_{h\in M_{0}}\beta_{v,h}\delta_{\tilde{O}_{v,h}},
\]
where
\[
\beta_{v,h}=\frac{\beta}{\left|\text{stab}_{\G_{1}(\Z)}(Kk_{v}\gamma_{1}^{-1}(h))\right|},
\]
and $\beta=\beta(v)$ normalizes the measure $\left(\pi_{\G_{1}(\Z)}\right)_{*}\tilde{\nu}_{v}$
to a probability measure. Let 
\[
M_{v}=\max_{h\in M_{0}}\alpha_{v,h},
\]
and 
\[
N_{v}=\max_{h\in M_{0}}\beta_{v,h}.
\]

Also let 
\[
E=\left\{ \Delta K\left(\rho,\eta)\G(\Z)\right)\mid\left|\text{stab}_{\G_{1}(\Z)}(K\rho)\right|>1\right\} .
\]
The following statements were proven in \cite{AESgrids}, 
\begin{lem}
\label{lem:weight lemma from aes-1}The following holds,
\begin{enumerate}
\item For all $h\in M_{0}$ such that $O_{v,h}\notin E,$ it holds that
$\alpha_{v,h}=M_{v}$ and $\beta_{v,h}=N_{v}$.
\item $\frac{\left|R_{v}\cap E\right|}{|R_{v}|}\to0$. 
\end{enumerate}
\end{lem}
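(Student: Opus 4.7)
The plan is to identify the denominators in $\alpha_{v,h}$ and $\beta_{v,h}$ as orders of two groups that turn out to coincide under a natural identification, and then to control the locus where they are non-trivial via equidistribution.

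For part (1), the formula for $\beta_{v,h}$ makes the implication $\tilde O_{v,h}\notin E\Longrightarrow\beta_{v,h}=N_v$ tautological, since $N_v=\max_h\beta_{v,h}$ is attained exactly at the $h$ for which the stabilizer in $\G_1(\Z)$ is trivial. The real task is to prove the corresponding statement for $\alpha_{v,h}$. I would exhibit a bijection between the stabilizer of $(k_v,h,a_vk_vg_v,g_v^{-1}hg_v)\G(\Z[\frac{1}{p}])$ in $\Delta K\times L_v(\Z_p)$ and the stabilizer of $Kk_v\gamma_1^{-1}(h)$ in $\G_1(\Z)$. Given $((k,h_0),(k,g_v^{-1}h_0g_v))\in\Delta K\times L_v(\Z_p)$ in the stabilizer, the coset identity produces a witness $\delta=(\delta_1,\delta_2)\in\G(\Z[\frac{1}{p}])$; matching the real and $p$-adic coordinates of $\delta_1$ through the decomposition \eqref{eq:rotation decomposition} and using $\Z[\frac{1}{p}]\cap\Z_p=\Z$ (as in Lemma \ref{lem:genrated vectors and sl_d(Z) matrices}) forces $\delta_1\in\G_1(\Z)$, and tracking the identity for $k$ yields exactly the condition that the element stabilizes $Kk_v\gamma_1^{-1}(h)$. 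Reversing the construction via the diagonal embedding used to define $L_v$ gives the inverse map, so the two stabilizers have the same order, and the conclusion follows.

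For part (2), I would use equidistribution together with the algebraic structure of $E$. Since $\G_1(\Z)=\text{SO}_d(\Z)$ is a finite group, $E$ decomposes as the union over nonidentity $\gamma\in\G_1(\Z)$ of the closed loci $\{[(\rho,\eta)] : \rho\gamma\rho^{-1}\in K\}$, each a proper closed subvariety of $\Delta K\diagdown\G(\R)/\G(\Z)$; in particular $E$ is closed and has measure zero under $(q_{\Delta K})_*\mu_{\G(\R)/\G(\Z)}$. Applying the portmanteau theorem to Corollary \ref{cor:s-arithemetic measures converge to haar} yields $(q_{\Delta K}\circ q_\infty)_*\eta_{v_i}(E)\to 0$. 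Combining this with the uniform lower bound $\alpha_{v,h}\geq M_v/|\G_1(\Z)|$ (all stabilizers have size at most $|\G_1(\Z)|$) and the normalization $\sum_h\alpha_{v,h}=1$, a short counting argument converts the mass bound into the desired point-counting bound $|R_v\cap E|/|R_v|\to 0$.

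The main obstacle is the stabilizer identification in part (1): pulling stabilizers from the $S$-arithmetic quotient back into $\G_1(\Z)$ demands careful bookkeeping of the decompositions \eqref{eq:rotation decomposition}--\eqref{eq:lattice decompostion} and of the diagonal embedding defining $L_v$. Part (2) is essentially formal once this subvariety picture of $E$ is in place and the equidistribution provided by Corollary \ref{cor:s-arithemetic measures converge to haar} is invoked.
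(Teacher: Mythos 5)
Your proposal is correct and takes essentially the same route as the source: the paper's own proof is only a pointer to Lemmata 6.3 and 6.4 of \cite{AESgrids}, and the argument there is precisely what you sketch --- identifying the stabilizer in $\Delta K\times L_{v}(\Z_{p})$ with $\text{stab}_{\G_{1}(\Z)}(Kk_{v}\gamma_{1}^{-1}(h))$ via the decompositions \eqref{eq:rotation decomposition} and \eqref{eq:lattice decompostion} together with $\Z_{p}\cap\Z\left[\frac{1}{p}\right]=\Z$ for part (1), and for part (2) observing that $E$ is a closed null set (the fixed space of any nonidentity element of $\text{SO}_{d}(\Z)$ has codimension at least $2$), so portmanteau applied to Corollary \ref{cor:s-arithemetic measures converge to haar} plus the uniform bound on stabilizer orders converts the vanishing mass into the counting statement. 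Your write-up supplies the details the paper leaves entirely to the reference.
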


\begin{proof}
See Lemmata 6.3 and 6.4 of \cite{AESgrids}.
\end{proof}
It is immediate that Lemma \ref{lem:weight lemma from aes-1} implies
the limits \eqref{eq:difference of counting measures}.

\subsubsection*{Acknowledgements}

I thank Uri Shapira for purposing this problem, his valuable support
and for many discussions. I also thank Cheng Zheng and Rene R{\"u}hr  for
many important discussions on this project.

\end{document}